\numberwithin{equation}{section}
\subjclass[2010]{Primary 58J52; Secondary 34S05, 34B24, 58J32}
\keywords{asymptotic determinant, zeta-determinant, regularized integrals}
\date{\today}
\begin{document}

\title[Regularized limit of determinants for discrete tori]
{Regularized limit of determinants for discrete tori}

\date{\today}

\author{Boris Vertman} 
\address{Institute of Mathematics and Computer Science, University of M\"unster, } 
\address{Einsteinstrasse 62, 48149 M\"unster, Germany} 
\email{vertman@uni-muenster.de}
\urladdr{http://wwwmath.uni-muenster.de/42/arbeitsgruppen/ag-differentialgeometrie/}

%\subjclass[2000]{58J52; 34B24}
\date{This document was compiled on: \today}

\begin{abstract}
We consider a combinatorial Laplace operator on a sequence of discrete graphs which approximates
the $m$-dimensional torus when the discretization parameter tends to infinity. 
We establish a polyhomogeneous expansion of the resolvent trace for the family 
of discrete graphs, jointly in the resolvent and the discretization parameter. 
Based on a result about interchanging regularized limits and regularized
integrals, we compare the regularized limit of the log-determinants of the combinatorial Laplacian on 
the sequence of discrete graphs with the logarithm 
of the zeta determinant for the Laplace Beltrami operator
on the $m$-dimensional torus. In a similar manner we may apply our method to
compare the product of the first $N\in \N$ non-zero eigenvalues of the Laplacian on a torus (or any other 
smooth manifold with an explicitly known spectrum) with the zeta-regularized 
determinant of the  Laplacian in the regularized limit as $N\to \infty$.

\end{abstract}

\maketitle
\tableofcontents

%**************************************************************************
\section{Introduction and formulation of the results}\label{sec-intro}
%*************************************************************************

Introduction of a zeta-regularized determinant for the Hodge Laplacian on compact
Riemannian manifolds by Ray and Singer \cite{RS} provided a counterpart to 
the determinant of discrete Laplacian on a simplicial complex associated to a triangulation 
of the manifold. Relation between the zeta-regularized determinant of the Hodge Laplacian 
as an analytic spectral invariant, and the determinant of the discrete Laplacian as a combinatorial
quantity, has been shown to go beyond being just formal counterparts by the proof
of the Ray-Singer conjecture \cite{RS} by Cheeger \cite{Che} and M\"uller \cite{Mue}.
\medskip

In fact, M\"uller \cite{Mue} proved that a specific combination of determinants for 
discrete Laplacians in various degrees, defined on simplicial complexes associated to a triangulation 
of a compact Riemannian manifold, converges to the corresponding combination 
of zeta-regularized determinants of Hodge Laplacians when the mesh of the triangulation goes to zero. 
Another instance of a link between determinants of discrete Laplacians
and zeta-regularized determinants of the corresponding Hodge Laplacians is 
a recent joint work with Reshetikhin \cite{ReVe} which in part motivated 
the Burghelea Friedlander Kappeler gluing formula for determinants by studying the 
corresponding combinatorial problem. \medskip

In both instances the behavior of the individual determinants remained open, since the 
discussion is rather based on existence of a well-defined limit for \emph{combinations} of 
determinants for discrete Laplacians under finer discretizations. This leads to the general problem
if the zeta-regularized determinant of a Hodge Laplacian may indeed be recovered from 
its discretization. This translates into a question on existence of an asymptotic expansion for 
determinants of discrete Laplacians under refinement of discretization.
\medskip 
 
Interest in the asymptotic behaviour for determinants of discrete Laplacians 
arises in various mathematical settings, even without the conjectured relationship 
with its zeta-regularized counterpart. In statistical mechanics
the interest stems from identification of the determinant in terms of the number of
spanning trees on a graph by Kirchhoff \cite{Kir}. Moreover, in the setting 
of two-dimensional lattices, determinants of certain $\Z^2$ subgraphs were expressed
in terms of the number of dimer coverings of related $\Z^2$ subgraphs by Temperley
\cite{Tem}. \medskip

In mathematical physics, existence of an asymptotic expansion for determinants of discrete Laplacians 
may provide a way to construct quantum field theory of a free scalar Bose field 
as a scaling limit of a Gaussian quantum field theory on a discrete simplicial complex associated to a triangulation 
of the manifold, as the mesh of the triangulation goes to zero. In fact, this intuition 
also lies behind Hawking \cite{Haw}.\medskip

In fact, asymptotics for determinants of discrete Laplacians has been studied in several instances with partial results. 
In the setup of rectilinear polygonal domains, Kenyon \cite{Ken} derived a partial asymptotic expansion 
for the determinant of the corresponding discrete Laplacian. However
existence of a constant term in that partial expansion, let alone its identification with the zeta-regularized determinant, 
remained an open problem. Other related results include Burton-Pemantle 
\cite{BuPe} and Sridhar \cite{Sri}. \medskip

In the setting of tori, the spectrum of the discrete and Hodge Laplacians is understood explicitly, 
which naturally allows for finer asymptotic results. This setting has been studied 
by Chinta, Jorgenson and Karlsson \cite{Chi}, who equated the constant term 
in the asymptotics for determinant of discrete Laplacians with the 
logarithm of the zeta-determinant, cf. also the preceeding results by Kasteleyn \cite{Kas} and Duplantier-David \cite{Dup}. 
Their analysis is based on a discussion of the discrete heat operator in terms of Bessel functions,
and is strongly rooted in the explicit structure of the spectrum. In the setting of two-dimensional tori, 
Chamaud \cite{Cha} has relaxed geometric assumptions by variational methods.  \medskip

Closely related to the question, if zeta-regularized determinant of a Hodge Laplacian may be recovered
from asymptotics of its discrete counterpart, is a problem of relating the zeta-regularized determinant
to finite eigenvalue products. Asymptotic behavior of eigenvalue products has been studied by Szeg\"o
\cite{Sze} for certain Topelitz matrices, and in fact Friedlander-Guillemin \cite{FG} have compared the
Szeg\"o and zeta-regularized determinants for zero-th order pseudo-differential operators. 
In a related work Friedlander \cite{Fri} obtained the zeta-determinant of a higher order elliptic 
pseudo-differential operator $A$ by considering the asymptotics of a determinant for some 
determinant class operator associated
to $A^{-1}$. \medskip

The presented references share the common idea of replacing the meromorphic
continuation technique used in the definition of the zeta-determinant, by analysis of asymptotic expansions 
of classical determinants. Our paper studies this question in the explicit setting of $m$-dimensional 
tori and reproves the result Chinta, Jorgenson and Karlsson proposing
an alternative independent ansatz. Our method is based on a polyhomogeneous
asymptotic expansion of the combinatorial resolvent trace jointly in ther resolvent and the discretization 
parameters. In particular this approach may be be viewed as part of a program initiated jointly with 
Lesch \cite{LesVer}, cf. also Vertman \cite{Ver} and Sauer \cite{Sa}. The main technical tool is a
careful analysis of the terms in the Euler Maclaurin formula, as well as a result on interchangeability 
of regularized limits and integrals. We focus on identification of the regularized limit of discrete 
determinants in terms of the zeta-regularized determinant, rather than studying other terms in the 
asymptotics as in \cite{Chi}. \medskip

\begin{remark} Our discussion is applicable  
beyond the setting of discrete tori. A brief look into our argument
makes apparent that our results depend on 
existence of a polyhomogeneous expansion of the combinatorial 
resolvent trace, which needs not be an exclusive feature of $m$-dimensional tori. 
We therefore expect our results to have 
applications to explicitly computable quotients of $\R^m$ 
under action of $\Z^m$ lattice subgroups. 
\end{remark}

\begin{remark}
We point out that the same principle may be applied to identify
the product of the first $N\in \N$ non-zero eigenvalues of the Laplacian on a torus (or any other 
smooth manifold with an explicitly known spectrum) with its zeta-regularized 
determinant in the regularized limit as $N\to \infty$.
\end{remark}

%**************************************************************************
\subsection{Hadamard partie finie regularization}
%**************************************************************************

Consider $f\in C^\infty(\R_+,\C), \R_+:=(0,\infty)$ such that for 
$x\to \infty$ in the Landau notation 
\begin{align}\label{expansion}
f(x) = \sum_{j=1}^{N-1} \sum_{k=0}^{M_j} a_{jk} x^{\A_j}\log^k(x) + 
\sum_{k=0}^{M_0} a_{0k} \log^k(x) + o(x^{\A_N}\log^{M_N}(x))
\end{align}
for some $N\in \N$ and $(\A_j)\subset \C$, such that $(\Re(\A_j))$
is a monotonously decreasing sequence with $\Re(\A_N)<0$.
Then we define the \emph{regularized limit} of $f(x)$ as $x\to \infty$ by
\begin{align*}
\LIM_{x\to \infty} f(x) := a_{00}.
\end{align*}
If $\Re(\A_N)<-1$, the integral of $f$ over $[1,R]$ admits 
an asymptotic expansion of the form \eqref{expansion} as $R\to \infty$, and we set
\begin{align*}
\regint_1^\infty f(x) dx :=\LIM_{R\to \infty} \int_1^R f(x) dx.
\end{align*}
Similar definition holds for the regularized limit at $x=0$ 
assuming an appropriate asymptotic expansion of $f(x)$ as $x\to 0$,
of the form \eqref{expansion}, where $(\Re(\A_j))$ monotonously
increasing with $\Re(\A_N)>0$. \medskip

One crucial analytic property of regularized limits and 
integrals is the following interchangeability result,
which has been presented in \cite[Lemma 3.3]{LesVer} 
under mildly stronger assumptions on the asymptotics\footnote{In contrast
to \cite[Lemma 3.3]{LesVer} we do not assume that $f(z,1)$ and $f(1,n)$ 
are smooth at $z,n=0$, and require partial asymptotics as $z,n\to \infty$ instead.
In fact the latter result is applied to some homogeneous $f(z,n)$, where 
smoothness of $f(z,1)$ and $f(1,n)$ at $z,n=0$ indeed fails.}. We provide
a proof for general asymptotic expansions of the form \eqref{expansion}.

\begin{prop}\label{interchange}
Let $f \in C^\infty(\R_+^2, \C), \R_+=(0,\infty),$ be homogeneous of order $d\in \C$ 
jointly in both variables and asymptotic expansions of the form \eqref{expansion}
in each of the variables $(z,n)\to \infty$ individually (with the other variable fixed)
\begin{equation*}
\begin{split}
&f(z,1) = \sum_{j=1}^{N-1} \sum_{k=0}^{M_j} a_{jk} z^{\A_j}\log^k(z) + 
\sum_{k=0}^{M_0} a_{0k} \log^k(z) + o(z^{\A_N}\log^{M_N}(z)), \\
&f(1,n) = \sum_{j=1}^{N'-1} \sum_{k=0}^{M'_j} b_{jk} n^{\beta_j}\log^k(n) + 
\sum_{k=0}^{M'_0} b_{0k} \log^k(n) + o(n^{\beta_{N'}}\log^{M'_N}(n)),
\end{split}
\end{equation*}
for some $N,N'\in \N$ and $(\A_j), (\beta_j) \subset \C$, such that $(\Re(\A_j)), (\Re(\beta_j))$
are monotonously decreasing sequences with
$\Re(\A_N) <-1, \Re(\beta_{N'}) <\min\{0,d+1\}$. Then
\begin{align}\label{int-lim}
\LIM_{n\to \infty} \regint_1^\infty f(z,n) dz = \regint_1^\infty \LIM_{n\to \infty} f(z,n) dz + \textup{Corr},
\end{align}
where $\textup{Corr}=\regint_0^\infty f(z,1) dz$ if $d=-1$ and zero otherwise.
\end{prop}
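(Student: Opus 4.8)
The plan is to collapse the joint structure by homogeneity, carry out the substitution $z\mapsto nt$ \emph{inside} the regularized integral while tracking the correction terms it produces, and finally read off the constant terms of the resulting asymptotic expansion in $n$. Set $g(t):=f(t,1)$. By joint homogeneity $f(z,n)=n^{d}g(z/n)$ and, dually, $g(t)=t^{d}f(1,1/t)$; the second identity transports the assumed expansion of $f(1,\cdot)$ at $\infty$ into a polyhomogeneous expansion of $g$ at $t\to0$, whose exponents are among the numbers $d-\beta_{j}$ together with a $t^{d}$-term. I would first record that the hypotheses $\Re(\A_{N})<-1$ and $\Re(\beta_{N'})<\min\{0,d+1\}$ make $g$ integrable at both ends and the two expansions sufficiently deep; in particular $\Re(d-\beta_{N'})>\max\{\Re(d),-1\}$, so the $t^{d}$-term genuinely occurs in the expansion at $0$ and the remainder there is integrable.

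Then I would substitute $z=nt$ in $\regint_{1}^{\infty}f(z,n)\,dz=\LIM_{R\to\infty}\int_{1}^{R}f(z,n)\,dz$, obtaining $n^{d+1}\LIM_{R\to\infty}\int_{1/n}^{R/n}g(t)\,dt$. Splitting the $t$-integral at $1$ and writing $\int_{1}^{S}g(t)\,dt=\regint_{1}^{\infty}g(t)\,dt+\widetilde H(S)$, where $\widetilde H$ is polyhomogeneous as $S\to\infty$ and has no $S^{0}\log^{0}S$-term, one is led to
\[
\regint_{1}^{\infty}f(z,n)\,dz=n^{d+1}\Big(\int_{1/n}^{1}g(t)\,dt+\regint_{1}^{\infty}g(t)\,dt\Big)+n^{d+1}\,\mathcal E(n),\qquad \mathcal E(n):=\LIM_{R\to\infty}\widetilde H(R/n).
\]
The key observation is that the rescaling $S\mapsto R/n$ turns precisely the $S^{0}\log^{k}S$-terms of $\widetilde H$ --- and these come \emph{only} from the $t^{-1}$-terms in the expansion of $g$ at $\infty$ --- into $\log^{0}R$-terms carrying a dependence on $n$, while no other model term of $\widetilde H(R/n)$ contributes to $\LIM_{R\to\infty}$. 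Hence $\mathcal E(n)$ is a polynomial in $\log n$ with vanishing constant term. (Equivalently, $n^{d+1}\int_{1/n}^{1}g(t)\,dt=\int_{1}^{n}f(z,n)\,dz$.)

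Finally I would expand $\int_{1/n}^{1}g(t)\,dt=\regint_{0}^{1}g(t)\,dt+\mathcal M(n)$ using the expansion of $g$ at $0$, where $\mathcal M(n)\to0$, and apply $\LIM_{n\to\infty}$ to each of the three summands. Since $\LIM_{n\to\infty}$ annihilates $n^{\alpha}\log^{k}n$ unless $\alpha=k=0$, it gives $\LIM_{n\to\infty}n^{d+1}\mathcal E(n)=0$, while $\LIM_{n\to\infty}n^{d+1}\big(\regint_{0}^{1}g(t)\,dt+\regint_{1}^{\infty}g(t)\,dt\big)$ equals $\regint_{0}^{\infty}f(z,1)\,dz$ when $d=-1$ and $0$ otherwise, i.e.\ exactly $\textup{Corr}$. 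It then remains to prove $\LIM_{n\to\infty}n^{d+1}\mathcal M(n)=\regint_{1}^{\infty}\LIM_{n\to\infty}f(z,n)\,dz$; for this I would use that $\LIM_{n\to\infty}f(z,n)=\LIM_{n\to\infty}n^{d}g(z/n)$ is precisely the $z^{d}$-component of the expansion of $g$ at $0$ --- a polynomial in $\log z$ times $z^{d}$ --- and compare antiderivatives on both sides: for $d\neq-1$ both reduce to the same finite combination of the values at $z=1$ of antiderivatives of the $z^{d}\log^{k}z$, and for $d=-1$ both vanish because $\int_{1}^{R}z^{-1}\log^{k}z\,dz$ has no $\log^{0}R$-term. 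Summing the three contributions yields \eqref{int-lim}.

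The hard part is the substitution step: a regularized integral does not transform like an ordinary one under $z\mapsto nt$, and the whole argument turns on showing that the $n$-dependent terms created by the induced rescaling $R\mapsto R/n$ all assemble into the harmless polynomial $\mathcal E(n)$ that the outer $\LIM$ then erases. Interwoven with this is the bookkeeping of the borderline exponents $0,-1,d$ and $d+1$ --- the only place where the hypotheses on $\Re(\A_{N})$ and $\Re(\beta_{N'})$ are genuinely used, and the only place where the value $d=-1$ behaves differently, thereby producing the correction term.
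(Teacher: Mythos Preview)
Your approach is essentially the paper's own: both proofs substitute $z\mapsto nt$ to reduce to the one-variable function $g(t)=f(t,1)$, split the resulting regularized integral near $t=0$ and $t=\infty$, and then read off the constant term in $n$. The paper invokes the change-of-variable rule for regularized integrals from \cite[Lemma~2.1.4]{Les:OFT} as a black box, obtaining $\regint_1^\infty f(z,n)\,dz = n^{d+1}\regint_{1/n}^\infty f(x,1)\,dx - a_{j_00}n^{d+1}\log n$; your $\mathcal E(n)$ is precisely a rederivation of that correction term (and in fact is slightly more careful, since it keeps all the $\log^k$-contributions from a possible $t^{-1}\log^k t$ term in $g$, not only $k=0$). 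The subsequent expansion of $\int_{1/n}^1 g = \regint_0^1 g + \mathcal M(n)$ and the termwise $\LIM$ match the paper's computation of $\regint_0^{1/n} f(x,1)\,dx$ line for line.

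One small correction: your parenthetical ``where $\mathcal M(n)\to 0$'' is not true in general. If some $\beta_j$ with $j<N'$ satisfies $\Re(\beta_j)>\Re(d)+1$, then the corresponding exponent $d-\beta_j$ of $g$ at $0$ has real part below $-1$, and the term it contributes to $\mathcal M(n)$ is of order $n^{\beta_j-d-1}$, which blows up. This does no damage to your argument, since you never use $\mathcal M(n)\to 0$: you correctly take $\LIM_{n\to\infty}n^{d+1}\mathcal M(n)$ and observe that only the $t^d$-part of the expansion of $g$ at $0$ contributes to the constant term. Just drop the claim.
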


It is an inherent part of the statement, that the regularized limits 
and integrals in the equality \eqref{int-lim} exist.

%************************************************************************** 
\subsection{Polyhomogeneous expansion of combinatorial resolvent traces}
%**************************************************************************

For any integer $n\in \N$ we consider the quotient space $\Z / n\Z$ 
which we refer to as a \emph{discrete circle}.
A finite product of $m$ copies of $\Z / n\Z$ defines a 
\emph{discrete torus} $\T^m_n$. which may be viewed as
a discretization of the $m$-dimensional torus manifold $\T^m$, 
given by a product of $m$ copies of $\mathbb{S}^1$.
Here, $\mathbb{S}^1 \subset \R^2$ is a circle with radius $1$. \medskip

The combinatorial Laplacian $\Delta_n$ on the discrete torus $\T^m_n$ 
is the sum of the Laplacians $\mathscr{L}_n$
on each discrete circle $\Z / n\Z$ component, 
defined for any $f:\Z / n\Z \to \R$ by the following 
difference operator
\begin{align*}
(\mathscr{L}_n f)([k]) = \frac{n^2}{4\pi^2} \left( (f([k])-f([k-1])) + (f([k])-f([k+1]))\right).
\end{align*} 
Then the spectra of $\mathscr{L}_n$ and $\Delta_n$ amount to 
\begin{equation}\label{spectra}
\begin{split}
&\sigma (\mathscr{L}_n) = \left\{\left.\frac{n^2}{\pi^2} 
\sin^2\left(\frac{\pi k}{n}\right) \right| 
k\in \N_0, k<n\right\}, \\
&\sigma (\Delta_n) = \left\{\left.\frac{n^2}{\pi^2} \sum_{i=1}^m
\sin^2\left(\frac{\pi k_i}{n}\right)\right|
k_i\in \N_0, k_i < n \ \textup{for} \ i=1,...,m\right\}.
\end{split}
\end{equation}
In this multiset notation, eigenvalues $\lambda$ appear multiple 
times according to their multiplicity $m(\lambda)$.
By the particular choice of the rescaling factor $\frac{n^2}{\pi^2}$, 
eigenvalues of $\mathscr{L}_n$ and $\Delta_n$ approximate the 
eigenvalues of the Laplace Beltrami operators on $\mathbb{S}^1$ 
and $\T^m$ respectively, as $n\to \infty$. \medskip

Consider the combinatorial Laplacian $\Delta_n$ on the discrete 
torus $\T^m_n$ and traces of the corresponding resolvent powers $\textup{Tr}(\Delta_n+z^2)^{-\A}$
for any $\A\in \N$ and $z\in \R_+$. We refer to the quantities $\textup{Tr}(\Delta_n+z^2)^{-m}$ 
and $\textup{Tr}(\Delta+z^2)^{-m}$
as \emph{resolvent traces}. The resolvent trace of the 
Laplace Beltrami operator $\Delta$ on the smooth torus $\T^m$ can be 
obtained as the limit of the combinatorial trace. In fact we have the following result.

\begin{prop}\label{trace-limit} The resolvent traces 
are smooth in $z\in \R_+$ and for $k\in \N$
$$\lim_{n\to \infty} \partial_z^{(k)} \, \textup{Tr}
(\Delta_n+z^2)^{-m} = \partial_z^{(k)} \, 
\textup{Tr}(\Delta+z^2)^{-m}.$$
\end{prop}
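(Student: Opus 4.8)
The plan is to prove the statement by a direct dominated--convergence argument, built on the explicit spectra in \eqref{spectra} together with the elementary bound $\sin(x)\ge 2x/\pi$ for $x\in[0,\pi/2]$. Smoothness in $z\in\R_+$ will be immediate: $\textup{Tr}(\Delta_n+z^2)^{-m}$ is a finite sum of the functions $z\mapsto(\lambda+z^2)^{-m}$, $\lambda\in\sigma(\Delta_n)$, each smooth on $\R_+$; and $\textup{Tr}(\Delta+z^2)^{-m}=\sum_{\ell\in\Z^m}(|\ell|^2+z^2)^{-m}$ (the factor $n^2/\pi^2$ in \eqref{spectra} being chosen precisely so that $\frac{n^2}{\pi^2}\sin^2(\pi k/n)\to k^2$), a series which—together with each of its term-by-term $z$-derivative series—converges uniformly on compact subsets of $\R_+$ by the estimate below, hence is smooth with derivatives obtained termwise.

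For the limit I would first reindex. Since $k\mapsto\sin^2(\pi k/n)$ is invariant under $k\mapsto k+n$, the summation range $\{0,\dots,n-1\}$ of each $k_i$ in \eqref{spectra} may be replaced by the symmetric range $I_n:=\{\ell\in\Z:-n/2<\ell\le n/2\}$, so that
\begin{equation*}
\textup{Tr}(\Delta_n+z^2)^{-m}=\sum_{\ell\in I_n^m} g_n(\ell),\qquad
g_n(\ell):=\Bigl(\tfrac{n^2}{\pi^2}\textstyle\sum_{i=1}^m\sin^2\bigl(\tfrac{\pi\ell_i}{n}\bigr)+z^2\Bigr)^{-m},
\end{equation*}
and I extend $g_n$ by $0$ on $\Z^m\setminus I_n^m$, so the sum runs over all of $\Z^m$. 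For $\ell\in I_n^m$ one has $\pi|\ell_i|/n\in[0,\pi/2]$, and Jordan's inequality gives $\frac{n^2}{\pi^2}\sin^2(\pi\ell_i/n)\ge\frac{4}{\pi^2}\ell_i^2$, hence $\frac{n^2}{\pi^2}\sum_i\sin^2(\pi\ell_i/n)+z^2\ge\frac{4}{\pi^2}(|\ell|^2+z^2)$ and therefore $0\le g_n(\ell)\le(\pi^2/4)^m(|\ell|^2+z^2)^{-m}$, \emph{uniformly in} $n$. An elementary induction shows that $\partial_z^{(k)}(\mu+z^2)^{-m}$ is a finite linear combination of terms $z^{k-2s}(\mu+z^2)^{-m-k+s}$ with $0\le s\le\lfloor k/2\rfloor$, whence $|\partial_z^{(k)}(\mu+z^2)^{-m}|\le C_{k,m}\,z^{-k}(\mu+z^2)^{-m}$ for all $\mu\ge0,\ z>0$. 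Combining,
\begin{equation*}
\bigl|\partial_z^{(k)}g_n(\ell)\bigr|\le C_{k,m}\,(\pi^2/4)^m\,z^{-k}\,(|\ell|^2+z^2)^{-m}=:h_z(\ell),\qquad \ell\in\Z^m,\ n\in\N,
\end{equation*}
and $\sum_{\ell\in\Z^m}h_z(\ell)<\infty$ for every $z>0$ since $m\ge1$.

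Finally, for each fixed $\ell\in\Z^m$ we have $\ell\in I_n^m$ for all large $n$, and $\frac{n^2}{\pi^2}\sin^2(\pi\ell_i/n)\to\ell_i^2$, so $\partial_z^{(k)}g_n(\ell)\to\partial_z^{(k)}(|\ell|^2+z^2)^{-m}$ for every $\ell$. Since $\partial_z^{(k)}\textup{Tr}(\Delta_n+z^2)^{-m}=\sum_{\ell\in\Z^m}\partial_z^{(k)}g_n(\ell)$ is a finite sum, the dominating function $h_z$ lets me apply dominated convergence on $\Z^m$ with the counting measure to conclude $\partial_z^{(k)}\textup{Tr}(\Delta_n+z^2)^{-m}\to\sum_{\ell\in\Z^m}\partial_z^{(k)}(|\ell|^2+z^2)^{-m}=\partial_z^{(k)}\textup{Tr}(\Delta+z^2)^{-m}$; bounding $h_z$ uniformly for $z$ in a compact subset of $\R_+$ upgrades this to locally uniform convergence in $z$. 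The only point that really needs care is securing the lower bound on the shifted eigenvalues \emph{uniformly in $n$}, which is exactly what the symmetric reindexing together with Jordan's inequality delivers (the endpoint $\ell_i=n/2$ being admissible, the inequality there reducing to an equality); the derivative estimate and the term-by-term differentiation of the smooth-torus series are routine.
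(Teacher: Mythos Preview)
Your argument is correct and complete. It differs from the paper's proof in a genuine way. The paper argues \emph{abstractly}, in the spirit of Dodziuk: order the eigenvalues, pick $K$ so that the tail $\sum_{k\ge K}(\lambda_k+z^2)^{-\alpha}$ is below $\varepsilon/3$, invoke the minimax principle to dominate the discrete tail by the continuous one, and control the finite initial block by pointwise spectral convergence; derivatives are handled by the identity $\partial_z\,\textup{Tr}(\Delta_n+z^2)^{-\alpha}=-2\alpha z\,\textup{Tr}(\Delta_n+z^2)^{-\alpha-1}$, which reduces the $k$-th derivative to the undifferentiated case with a higher resolvent power. By contrast you use the explicit spectrum: the symmetric reindexing $I_n=\{-n/2<\ell\le n/2\}$ together with Jordan's inequality gives the uniform domination $g_n(\ell)\le(\pi^2/4)^m(|\ell|^2+z^2)^{-m}$, and dominated convergence on $\Z^m$ with counting measure finishes the job, derivatives included via your pointwise bound $|\partial_z^{(k)}(\mu+z^2)^{-m}|\le C_{k,m}\,z^{-k}(\mu+z^2)^{-m}$. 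Your route is more elementary and yields locally uniform convergence in $z$ essentially for free; the paper's route does not touch the explicit form of the spectrum and would transfer verbatim to any setting where one has monotone spectral approximation, which is the generality Dodziuk's argument is designed for.
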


Our first central result is a polyhomogeneous expansion 
of the resolvent trace for the combinatorial Laplacian 
$\Delta_n$, jointly in the resolvent parameter $z\in \R_+$ 
and the discretization parameter $n\in \N$. 

\begin{theorem} \label{trace-phg}
The resolvent trace for $\Delta_n$ admits a partial polyhomogeneous expansion 
$$\textup{Tr}(\Delta_n+z^2)^{-m} = \sum_{j=0}^{m} h_{-m-j}(z,n) + H(z,n),$$
where each $h_{-m-j}\in C^\infty(\R_+^2)$ is homogeneous of order $(-m-j)$ jointly in $(z,n)$, 
$h_{-m-j}(z, 1)$ and $h_{-m-j}(1, n)$ admit an asymptotic expansion of the form
\eqref{expansion} as $z, n\to \infty$, respectively.
The remainder term satisfies $H_N(z,n) = O(z^{-2m-2})$,  as $z\to \infty$, 
uniformly in $n>0$.
\end{theorem}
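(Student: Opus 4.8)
The starting point is the explicit spectral sum furnished by \eqref{spectra},
\[
\textup{Tr}(\Delta_n+z^2)^{-m}=\frac{\pi^{2m}}{n^{2m}}\sum_{k_1,\dots,k_m=0}^{n-1}\Bigl(\sum_{i=1}^m\sin^2\tfrac{\pi k_i}{n}+\tfrac{\pi^2z^2}{n^2}\Bigr)^{-m}.
\]
I will read the right hand side as a lattice sum over $\tfrac1n\Z^m\subset(\R/\Z)^m$ of the function $g_z(\theta):=\bigl(\sum_i\sin^2(\pi\theta_i)+w^2\bigr)^{-m}$, where $w:=\pi z/n$; for $z>0$ this $g_z$ is smooth and $1$-periodic in each $\theta_i$. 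The plan is to apply the Euler--Maclaurin formula iteratively in the $m$ coordinates, equivalently the aliasing identity $\sum_{k=0}^{n-1}g(k/n)=n\sum_{\ell\in\Z}\widehat g(n\ell)$, which splits the sum into the integral term
\[
I(z,n):=\frac{\pi^{m}}{n^{m}}\int_{[0,\pi]^m}\Bigl(\sum_{i=1}^m\sin^2\phi_i+\tfrac{\pi^2z^2}{n^2}\Bigr)^{-m}d\phi
\]
(after $\phi_i=\pi k_i/n$) plus correction terms indexed by the nonzero frequencies $\ell\neq0$, and to analyse the two parts separately.

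The leading homogeneous piece is $I(z,n)$ itself: since $w=\pi z/n$ is invariant under $(z,n)\mapsto(\lambda z,\lambda n)$ and $\tfrac{\pi^{m}}{n^{m}}=z^{-m}w^{m}$, one has $I(z,n)=z^{-m}\!\int_{[0,\pi]^m}w^{m}\bigl(\sum_i\sin^2\phi_i+w^2\bigr)^{-m}d\phi$, which is homogeneous of order $-m$ and smooth on $\R_+^2$ because $g_z$ is smooth in $\theta$ for $z>0$; I would take $h_{-m}$ to be (the leading part of) $I$. To see that $I(z,1)$ and $I(1,n)$ carry expansions of the form \eqref{expansion} I would use, for $z\to\infty$, the elementary large-$w$ expansion of the integrand, and for $n\to\infty$ the small-$w$ expansion obtained by localising near the $2^m$ corners of $[0,\pi]^m$ — where $\sum_i\sin^2\phi_i$ vanishes to order two — Taylor expanding $\sin^2\phi_i=\phi_i^2-\tfrac13\phi_i^4+\cdots$ and rescaling $\phi=w\psi$, which produces model integrals $\int_{\R^m}(|\psi|^2+1)^{-m-p}Q_p(\psi)\,d\psi$ over the whole space; in the low-dimensional cases $m\le 2$ these model integrals diverge and must be Hadamard-regularised against the cutoff, which is exactly the mechanism generating the logarithmic terms allowed by \eqref{expansion}. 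The remaining, strictly lower-order homogeneous pieces $h_{-m-1},\dots,h_{-2m}$ (several of which may turn out to vanish) come from applying the same localise--Taylor--rescale procedure to the finite-difference correction terms produced by the Euler--Maclaurin formula; each such term is again homogeneous in $(z,n)$ of a definite order in $\{-m-1,\dots,-2m\}$ and smooth on $\R_+^2$.

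It then remains to estimate the true remainder $H(z,n)=\textup{Tr}(\Delta_n+z^2)^{-m}-\sum_{j=0}^{m}h_{-m-j}(z,n)$, which consists of the aliasing corrections together with the tails of the corner expansions. For the former the decisive point is that $g_z$, as a function on $(\R/\Z)^m$, extends holomorphically in each variable to a strip of half-width proportional to $w=\pi z/n$ — the nearest complex singularities lie where $\sum_i\sin^2(\pi\theta_i)=-w^2$ — so shifting contours gives $|\widehat g_z(n\ell)|\le C\,(n/z)^{m}\,e^{-c|\ell|z}$; multiplying by the prefactor $\pi^{2m}n^{-m}$ and summing over $\ell\neq0$ produces a term that is $O(z^{-K}e^{-cz})$ for every $K$, uniformly in $n\ge1$, hence in particular $O(z^{-2m-2})$. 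For the corner tails the Taylor remainder after sufficiently many steps is $O(w^{2p})$ times a convergent model integral, whose homogeneity order, once the prefactors are restored, has dropped below $-2m$, giving once more an $O(z^{-2m-2})$ contribution uniform in $n$. The main obstacle I anticipate is exactly this uniformity in $n$: the naive large-$z$ expansion $\textup{Tr}(\Delta_n+z^2)^{-m}=\sum_l\binom{-m}{l}\textup{Tr}(\Delta_n^{\,l})\,z^{-2m-2l}$ is useless here, since $\textup{Tr}(\Delta_n^{\,l})$ grows like $n^{m+2l}$ so that no finite truncation is $n$-uniform; it is the passage through the torus integral $I(z,n)$ — which interpolates correctly between the regimes $z\ll n$ and $z\gg n$ — and through the exponential bound on its aliasing corrections that supplies the cancellations restoring uniformity, and obtaining the required constant-independent-of-$n$ estimates on all the error terms (corner Taylor remainders, the width of the holomorphic extension, and the interaction of the cutoff with the divergent model integrals when $m\le2$) is the technical heart of the argument.
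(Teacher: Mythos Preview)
Your Poisson-summation route is a legitimate and structurally cleaner alternative to the paper's method. Since the summand is genuinely $1$-periodic in each $\theta_i$, the aliasing identity $\sum_{k}g(k/n)=n^m\sum_{\ell}\widehat g(n\ell)$ has \emph{no} boundary terms; hence the only homogeneous piece is the integral $I(z,n)$ itself, of order $-m$, and one may simply take $h_{-m}=I$ and $h_{-m-j}=0$ for $j\ge1$. (In the paper's decomposition, lower-order homogeneous terms do appear --- they come from the trapezoid endpoint evaluations $P_{4,j}$ and from the inclusion--exclusion \eqref{resolvent-trace-formula} --- but they cancel against one another when combined.) So your remarks attributing $h_{-m-1},\dots,h_{-2m}$ to ``finite-difference correction terms produced by the Euler--Maclaurin formula'' and placing ``tails of the corner expansions'' into $H$ are misplaced: in the Poisson picture there are no such correction terms, and the corner analysis you sketch is needed only to verify that $I(z,1)$ and $I(1,n)$ admit expansions of the form \eqref{expansion}, not to decompose $I$ further into homogeneous pieces (it is already exactly homogeneous).

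The genuine gap is your bound on the aliasing sum $\sum_{\ell\neq0}\widehat g_z(n\ell)$. Your claim that $g_z$ extends holomorphically to a strip of half-width ``proportional to $w=\pi z/n$'' is correct only for $w$ bounded; the nearest singularity in $\theta_j$ sits at $\Im\theta_j=(1/\pi)\sinh^{-1}(w)$, which for large $w$ grows only like $(1/\pi)\log(2w)$. Thus in the regime $n\ll z$ (equivalently $w$ large, which includes every fixed $n$ as $z\to\infty$) the contour shift yields only polynomial decay, roughly $|\widehat g_z(n\ell)|\lesssim w^{-2m}(2w)^{-2n\|\ell\|_\infty}$, and your asserted uniform bound $O(z^{-K}e^{-cz})$ fails. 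The aliasing \emph{is} $O(z^{-2m-2})$ uniformly in $n\ge1$, but proving it from the contour picture requires treating $w\le1$ and $w>1$ separately and, in the second regime, checking that $n^{m}(n/z)^{2n}$ stays uniformly $O(z^{-2})$ over $1\le n\le\pi z$ --- a nontrivial optimisation you have not carried out. The paper avoids this difficulty altogether: rather than bounding a Fourier tail, it writes the sum--integral discrepancy as a Bernoulli-polynomial remainder ($\beta_j=3$ terms) and estimates $\int_{[0,n]^m}|\partial_{x_j}^{2M+1}f|\,dx$ directly via Lemma~\ref{H} and the elementary inequalities \eqref{basic1}--\eqref{basic2}, obtaining the uniform $O(z^{-2m-2})$ without any regime splitting.
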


%**************************************************************************
\subsection{Zeta-functions and zeta-regularized determinants}
%**************************************************************************

In the next step we introduce (zeta-regularized) determinants of $\Delta_n$ and $\Delta$.
In the discrete case, the determinant of $\Delta_n$ is defined here as a product of its non-zero 
eigenvalues, counted with their multiplicities, and in fact satisfies the following integral representation
\begin{equation}\label{log-det}
\log \det \Delta_n = -2 \regint_0^\infty z  \textup{Tr}(\Delta_n+z^2)^{-1} dz,
\end{equation} 
which is an immediate consequence of the following computation
\begin{equation*}
-2\regint_0^\infty \frac{zdz}{(\lambda +z^2)} = - \LIM_{R\to \infty} \LIM_{\epsilon \to 0} \, 
\Big[\log(\lambda +z^2)\Big]^{z=R}_{z=\epsilon} =  \left\{ \begin{split} &\log(\lambda), \
\textup{if} \ \lambda \neq 0, \\ &0, \ \textup{if} \ \lambda =0.\end{split}\right.
\end{equation*}
Integrating \eqref{log-det} by parts $(m-1)$ times (more precisely we perform integration by parts for the 
integral on $[\varepsilon, R]$ and take the regularized limit of the total expression as 
$\varepsilon \to 0$ and $R\to \infty$) yields
\begin{equation}\label{det-comb}
\log \det \Delta_n = -2 \regint_0^\infty  
z^{2m-1} \textup{Tr}(\Delta_n+z^2)^{-m} dz,
\end{equation}
with the boundary terms vanishing in the regularized limit. \medskip

The zeta-regularized determinant of the Laplace Beltrami 
operator $\Delta$ is obtained by the following procedure. The 
\emph{zeta-function} of $\Delta$ is defined for $\Re(s) > m/2$ by
$$
\zeta(s,\Delta):= \sum_{\lambda \in \textup{Spec}\Delta 
\backslash \{0\}} m(\lambda) \lambda^{-s},
$$
where $m(\lambda)$ denotes the multiplicity of the eigenvalue 
$\lambda$. The integral expression (cf. \cite[Section 1.3]{LesVer})
amounts after iterative integration by parts to
\begin{align*}
\zeta(s,\Delta)= 2 \, \frac{\sin \pi s}{\pi} \, \frac{\Gamma(1-s)\Gamma(m)}{\Gamma(m-s)} 
\regint_0^\infty z^{2m-2s-1} \textup{Tr}(\Delta+z^2)^{-m} dz,
\end{align*}
with the standard asymptotic expansion of the resolvent 
trace $\textup{Tr}(\Delta+z^2)^{-m}$ yields a meromorphic extension 
of $\zeta(s,\Delta)$ to the whole complex plane $\C$ with $s=0$ 
being a regular point. We define the \emph{zeta-regularized determinant} by
\begin{equation}\label{det}
\log \det\nolimits_\zeta \Delta := -\left. \frac{d}{ds} \right|_{s=0} \zeta(s,\Delta) = 
-2 \regint_0^\infty z^{2m-1}\textup{Tr}(\Delta+z^2)^{-m} dz.
\end{equation}

%**************************************************************************
\subsection{Main result: Approximation by combinatorial determinants}\ 
%**************************************************************************
\\[3mm] Our main result now reads as follows.

\begin{theorem}\label{main} The logarithmic determinant 
$\log \det \Delta_n$ admits a regularized limit as $n\to \infty$, 
which equals the logarithm of the zeta-determinant of the 
Laplace Beltrami operator, i.e. 
\begin{align*}
\log \det\nolimits_\zeta \Delta = \LIM_{n\to \infty} \log \det \Delta_n.
\end{align*}
\end{theorem}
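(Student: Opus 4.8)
The plan is to reduce everything to the integral representations \eqref{det-comb} and \eqref{det}. Since
\[
\log\det\Delta_n=-2\regint_0^\infty z^{2m-1}\,\textup{Tr}(\Delta_n+z^2)^{-m}\,dz
\quad\text{and}\quad
\log\det\nolimits_\zeta\Delta=-2\regint_0^\infty z^{2m-1}\,\textup{Tr}(\Delta+z^2)^{-m}\,dz,
\]
it is enough to show that $\LIM_{n\to\infty}$ can be carried through the regularized integral $\regint_0^\infty z^{2m-1}(\,\cdot\,)\,dz$ applied to the resolvent trace, with vanishing correction, and in particular that the regularized limit on the left exists. Since the integrand $z^{2m-1}\textup{Tr}(\Delta_n+z^2)^{-m}$ is not uniformly integrable near $z=\infty$ (its $z\to\infty$ asymptotics contains a term of order $z^{-1}$ with coefficient $\sim n^m$ blowing up in $n$), the limit cannot be passed under the integral naively there; I would write $\regint_0^\infty=\regint_0^1+\regint_1^\infty$ and treat the two ranges separately, the range $[1,\infty)$ being the delicate one.

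On $[0,1]$ the analysis is elementary. The zero eigenvalue contributes exactly $z^{-2m}$ to $\textup{Tr}(\Delta_n+z^2)^{-m}$ for every $n$, and $\regint_0^1 z^{2m-1}z^{-2m}\,dz=\regint_0^1 z^{-1}\,dz=0$; the partial trace over the nonzero eigenvalues is bounded on $[0,1]$ uniformly in $n$, as these eigenvalues admit a positive lower bound independent of $n$, and it converges pointwise to the corresponding quantity for $\Delta$ by Proposition \ref{trace-limit}. Dominated convergence then yields $\LIM_{n\to\infty}\regint_0^1 z^{2m-1}\textup{Tr}(\Delta_n+z^2)^{-m}\,dz=\regint_0^1 z^{2m-1}\textup{Tr}(\Delta+z^2)^{-m}\,dz$ with no correction.

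On $[1,\infty)$ I would insert the decomposition of Theorem \ref{trace-phg}, $\textup{Tr}(\Delta_n+z^2)^{-m}=\sum_{j=0}^{m}h_{-m-j}(z,n)+H(z,n)$, using linearity of the regularized integral together with the fact that $\LIM_{n\to\infty}$ distributes over a finite sum of quantities each admitting an asymptotic expansion in $n$. For the remainder, $z^{2m-1}H(z,n)=O(z^{-3})$ as $z\to\infty$ uniformly in $n$; it is thus an ordinary, uniformly dominated integral, and since $H(z,n)$ is then bounded in $n$ it converges pointwise (to the corresponding remainder for $\Delta$), so dominated convergence disposes of it. Each summand $z^{2m-1}h_{-m-j}(z,n)$ is homogeneous of order $m-1-j$ jointly in $(z,n)$, and by Theorem \ref{trace-phg} and homogeneity its restrictions to $n=1$ and to $z=1$ have expansions of the form \eqref{expansion} which, carried to sufficient order, satisfy the decay hypotheses $\Re(\A_N)<-1$ and $\Re(\beta_{N'})<\min\{0,m-j\}$ of Proposition \ref{interchange}. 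That proposition then gives
\[
\LIM_{n\to\infty}\regint_1^\infty z^{2m-1}h_{-m-j}(z,n)\,dz
=\regint_1^\infty\Bigl(\LIM_{n\to\infty}z^{2m-1}h_{-m-j}(z,n)\Bigr)\,dz+\textup{Corr}_j,
\]
where $\textup{Corr}_j=\regint_0^\infty z^{2m-1}h_{-m-j}(z,1)\,dz$ occurs only for the single critical index $j=m$, at which the homogeneity order equals $-1$, and vanishes for $j=0,\dots,m-1$.

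Summing over $j$ and adding the remainder and the $[0,1]$ contribution, I obtain $\LIM_{n\to\infty}\log\det\Delta_n=\log\det\nolimits_\zeta\Delta-2\,\textup{Corr}_m$; the reassembly of the principal terms uses that, by Proposition \ref{trace-limit}, passing to the limit $n\to\infty$ in the decomposition gives, for each fixed $z$,
\[
z^{2m-1}\textup{Tr}(\Delta+z^2)^{-m}=\sum_{j=0}^m\LIM_{n\to\infty}\bigl(z^{2m-1}h_{-m-j}(z,n)\bigr)+z^{2m-1}\lim_{n\to\infty}H(z,n),
\]
which integrates back against $\regint_1^\infty$ term by term. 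The whole theorem therefore reduces to the single identity $\textup{Corr}_m=\regint_0^\infty z^{2m-1}h_{-2m}(z,1)\,dz=0$, which I expect to be the main obstacle. Its resolution is to identify the top homogeneous term of the expansion as precisely the zero-mode contribution: the zero eigenvalue contributes $z^{-2m}$ to $\textup{Tr}(\Delta_n+z^2)^{-m}$ for every $n$, a term that is $n$-independent and already homogeneous of degree $-2m$ jointly in $(z,n)$, so the polyhomogeneous splitting of Theorem \ref{trace-phg} must absorb it entirely into $h_{-2m}$, i.e.\ $h_{-2m}(z,n)\equiv z^{-2m}$. Hence $\textup{Corr}_m=\regint_0^\infty z^{-1}\,dz=\regint_0^1 z^{-1}\,dz+\regint_1^\infty z^{-1}\,dz=0$, which gives $\LIM_{n\to\infty}\log\det\Delta_n=\log\det\nolimits_\zeta\Delta$ and, in particular, existence of the regularized limit.
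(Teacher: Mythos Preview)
Your overall strategy mirrors the paper's proof almost exactly: the same integral representations, the same split $\regint_0^\infty=\regint_0^1+\regint_1^\infty$, zero-mode subtraction and dominated convergence on $[0,1]$, dominated convergence for the remainder $H$ on $[1,\infty)$, and Proposition~\ref{interchange} for each homogeneous piece. You correctly reduce everything to the single quantity $\textup{Corr}_m=\regint_0^\infty z^{2m-1}h_{-2m}(z,1)\,dz$.

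The gap is in your identification of $h_{-2m}$. You argue that the zero eigenvalue contributes $z^{-2m}$, an $n$-independent function jointly homogeneous of degree $-2m$, and conclude $h_{-2m}(z,n)\equiv z^{-2m}$. Even granting that the zero-mode piece lands in $h_{-2m}$, nothing prevents the nonzero-mode part of the trace from contributing to the \emph{same} homogeneous order, and you have given no argument that it does not. In fact it does: the homogeneous components in Theorem~\ref{trace-phg} are determined uniquely (a nonzero function jointly homogeneous of degree $-2m$ cannot satisfy the uniform remainder bound $O(z^{-2m-2})$), and the paper computes $h_{-2m}$ directly via the inclusion--exclusion representation~\eqref{resolvent-trace-formula}. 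Each of the $2^m$ summands there contributes exactly $z^{-2m}$ at order $-2m$ (Proposition~\ref{S-expansion} gives $h'_{-2m}=z^{-2m}$ for the top summand $S(z,n)$, and the same holds for the lower-dimensional sums), so the alternating total is $\sum_{k=0}^m(-1)^k\binom{m}{k}z^{-2m}=0$. This is the ``Moreover, $h_{-2m}=0$'' clause added in the restatement as Theorem~\ref{mainthm}, and it is what the paper actually invokes to kill $\textup{Corr}_m$. Your value $z^{-2m}$ is what the \emph{single} summand $S(z,n)$ produces, not the full trace; equivalently, the nonzero-mode trace carries a $-z^{-2m}$ contribution at order $-2m$ that your heuristic misses.

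The discrepancy happens to be numerically harmless, since $\regint_0^\infty z^{-1}\,dz=0$ anyway, so your final conclusion is correct. But as written the step ``hence $h_{-2m}(z,n)\equiv z^{-2m}$'' is unjustified (and false); to close the argument you must either supply an independent reason why the nonzero spectrum contributes nothing at order $-2m$, or---as the paper does---compute $h_{-2m}$ from the Euler--Maclaurin construction and observe the inclusion--exclusion cancellation.
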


A remark on the relation to the discussion in \cite{LesVer} is in order. 
By Proposition \ref{trace-limit} we may write 
\begin{align*}
\log \det\nolimits_\zeta \Delta 
&= -2 \regint_0^\infty  z^{2m-1}
\textup{Tr}(\Delta_n+z^2)^{-m} dz \\
&= -2 \regint_0^\infty  z^{2m-1}
\lim_{n\to \infty} \textup{Tr}(\Delta_n+z^2)^{-m} dz\\
&= -2 \regint_0^\infty  z^{2m-1}
\lim_{n\to \infty} \sum_{k_1=0}^{n-1} \cdots \sum_{k_m=0}^{n-1} 
\left(\w(n,k_1,...,k_m) + z^2\right)^{-m} dz,
\end{align*}
where we have introduced
\begin{align}\label{w-notation}
\w(n,k_1,...,k_m)
:=\frac{n^2}{\pi^2} \sum_{i=1}^m \sin^2\left(\frac{\pi k_i}{n}\right),
\end{align}
and hence our main Theorem \ref{main} looks as an 
application of the result on interchangeability 
of regularized sums and integrals in \cite{LesVer}. 
However in contrast to the setting considered in \cite{LesVer}, 
the individual summands $(\w(n,k_1,\ldots, k_m)+z^2)^{-m}$ 
here depend on the limiting parameter $n\in \N$ and do 
not individually admit a polyhomogeneous expansion jointly 
in the summation parameters $(k_1,...,k_m)$ and the resolvent parameter $z$.
Only the full sum in the expression of 
$\textup{Tr}(\Delta_n+z^2)^{-m}$ is polyhomogeneous,
while in the setup of \cite{LesVer} polyhomogeneity 
is rather lost after summation.

%**************************************************************************
\subsection{Comparison with a theorem by Chinta, Jorgenson and Karlsson}
%**************************************************************************

Chinta, Jorgenson and Karlsson \cite{Chi} use a different method 
to establish Theorem \ref{main}. In fact they consider a slightly 
more general setting where they allow the individual cyclic factors
$\Z / n\Z$ to converge to $\mathbb{S}^1$ at different rates.
\medskip

If for $\underline{n}:=(n_1,..,n_m) \in \N^m$ we write $\mathbb{T}^m_{\underline{n}}
= (\Z / n_1\Z) \times \cdots \times (\Z / n_m\Z)$, then 
in our picture the resolvent trace of the corresponding combinatorial 
Laplacian $\Delta_{\underline{n}}$ admits a partial polyhomogeneous
expansion as $(\underline{n}, z) \to \infty$. We consider here the special 
case of $n_j=n$ for all $j=1,..,m$, with the argument for the general setup 
going along the same lines. Then \cite{Chi} asserts the following.

\begin{thm}[\cite{Chi}]\label{main-chi}
Consider the rescaled combinatorial Laplacian
$\Delta'_n$ with $\sigma (\Delta_n) = \frac{n^2}{4\pi^2}\sigma(\Delta'_n)$,
which approximates the Laplace Beltrami operator $\Delta'$ on a rescaled torus
given by $m$ copies of $(2\pi)^{-1} \mathbb{S}^1$. Then
the logarithmic determinant 
$\log \det \Delta'_n$ admits a regularized limit as $n\to \infty$, 
which equals the logarithm of the zeta-determinant of the $\Delta'$, i.e. 
\begin{align*}
\log \det\nolimits_\zeta \Delta' = \LIM_{n\to \infty} \log \det \Delta'_n.
\end{align*}
\end{thm}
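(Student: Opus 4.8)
The plan is to deduce Theorem~\ref{main-chi} from our Theorem~\ref{main} by tracking how the two sides of the asserted identity transform under the rescaling that relates the pair $(\Delta_n,\Delta)$ to the pair $(\Delta'_n,\Delta')$. By the defining relation $\sigma(\Delta_n)=\tfrac{n^2}{4\pi^2}\sigma(\Delta'_n)$ we have, spectrally, $\Delta'_n=\tfrac{4\pi^2}{n^2}\Delta_n$. On the continuous side the rescaled torus is obtained from $\T^m$ by scaling every circle factor by $(2\pi)^{-1}$, i.e.\ scaling the metric by $(2\pi)^{-2}$, so the Laplace--Beltrami eigenvalues are multiplied by $4\pi^2$ and hence, spectrally, $\Delta'=4\pi^2\Delta$. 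Thus everything reduces to the behaviour of $\log\det$ and $\log\det_\zeta$ under multiplication of the spectrum by a fixed positive constant, combined with the identity already established in Theorem~\ref{main}.

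First I would treat the combinatorial side. The operator $\Delta_n$ on $\T^m_n$ has exactly $n^m$ eigenvalues counted with multiplicity, of which precisely one (the tuple $k_1=\dots=k_m=0$, since $\sin(\pi k_i/n)=0$ for $0\le k_i<n$ forces $k_i=0$) vanishes, with multiplicity one. Multiplying the spectrum by $\tfrac{4\pi^2}{n^2}$ therefore gives
\[
\log\det\Delta'_n=(n^m-1)\log\bigl(\tfrac{4\pi^2}{n^2}\bigr)+\log\det\Delta_n.
\]
The first summand is an explicit finite $\N$-linear combination of $n^m\log n$, $n^m$, $\log n$ and $1$, so it trivially admits an asymptotic expansion of the form \eqref{expansion} as $n\to\infty$, with constant term $a_{00}=-\log(4\pi^2)$ (for every $m\ge 1$ the potentially divergent pieces $n^m\log n$, $n^m$ and $\log n$ carry no $n^0\log^0 n$ contribution). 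Since $\log\det\Delta_n$ also admits such an expansion, with $\LIM_{n\to\infty}\log\det\Delta_n=\log\det_\zeta\Delta$ by Theorem~\ref{main}, additivity of the coefficient $a_{00}$ shows that $\log\det\Delta'_n$ is polyhomogeneous in $n$ and that $\LIM_{n\to\infty}\log\det\Delta'_n=\log\det_\zeta\Delta-\log(4\pi^2)$.

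Next I would treat the zeta-regularized side. From $\Delta'=4\pi^2\Delta$ one gets $\zeta(s,\Delta')=(4\pi^2)^{-s}\,\zeta(s,\Delta)$ for $\Re(s)>m/2$, hence by meromorphic continuation
\[
\log\det\nolimits_\zeta\Delta'=-\left.\frac{d}{ds}\right|_{s=0}\left[(4\pi^2)^{-s}\zeta(s,\Delta)\right]=\log(4\pi^2)\,\zeta(0,\Delta)+\log\det\nolimits_\zeta\Delta.
\]
It then remains to evaluate $\zeta(0,\Delta)$ for the flat torus. The heat trace factorises as $\textup{Tr}\,e^{-t\Delta}=\theta(t)^m$ with $\theta(t)=\sum_{k\in\Z}e^{-tk^2}$, and Poisson summation gives $\theta(t)=\sqrt{\pi/t}\,\bigl(1+O(e^{-c/t})\bigr)$ as $t\to 0^+$ for some $c>0$; consequently $\textup{Tr}\,e^{-t\Delta}=(\pi/t)^{m/2}+O(e^{-c/t})$, so every heat coefficient beyond the leading one vanishes and in particular the constant-term coefficient is $0$. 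Therefore $\zeta(0,\Delta)=0-\dim\ker\Delta=-1$, so $\log\det_\zeta\Delta'=\log\det_\zeta\Delta-\log(4\pi^2)$. Comparing with the previous paragraph yields $\LIM_{n\to\infty}\log\det\Delta'_n=\log\det_\zeta\Delta'$, which is the claim.

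The real content of the argument is thus entirely contained in Theorem~\ref{main}; beyond invoking it, the proof is just a bookkeeping of the two scaling anomalies on the discrete and on the continuous side. The only points that need a little care — and the closest thing to an obstacle here — are checking that subtracting the divergent term $(n^m-1)\log(n^2/4\pi^2)$ does not destroy polyhomogeneity in $n$ (it does not, being an elementary expression) and pinning down $\zeta(0,\Delta)=-1$, which is precisely the value that makes the two anomalies cancel and is the reason the statement comes out clean for tori.
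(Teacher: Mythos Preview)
Your argument is correct and follows essentially the same route as the paper: both deduce Theorem~\ref{main-chi} from Theorem~\ref{main} by computing the two scaling anomalies
\[
\log\det\Delta'_n=\log\det\Delta_n-(n^m-1)\log\frac{n^2}{4\pi^2},\qquad
\log\det\nolimits_\zeta\Delta'=\log\det\nolimits_\zeta\Delta+2\,\zeta(0,\Delta)\log 2\pi,
\]
and then matching their constant terms using $\zeta(0,\Delta)=-1$. The only difference is in how $\zeta(0,\Delta)=-1$ is justified: the paper simply invokes the identity $\zeta(0,\Delta)=-\dim\ker\Delta$ for closed manifolds, while you verify it directly for the flat torus via Poisson summation on the factorised heat trace, showing that all subleading heat coefficients vanish. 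Your justification is in fact the more careful one, since the bare identity $\zeta(0,\Delta)=-\dim\ker\Delta$ requires the constant heat coefficient to vanish, which holds automatically only in odd dimensions; for even-dimensional flat tori it is true but needs exactly the Poisson-summation argument you give.
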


This statement can be easily seen to correspond to Theorem \ref{main},
which is basically an issue of conventions. Since the number of non-zero eigenvalues of $\Delta_n$
is $(n^m-1)$, and by the identity $\zeta(s, \Delta') = (2\pi)^{-2s}\zeta(s, \Delta)$, we 
obtain the following relations
\begin{align*}
&\log \det \Delta'_n = \log \det \Delta_n - (n^m-1) \log \frac{n^2}{4\pi^2}.
\\ &\log \det\nolimits_\zeta \Delta' = \log \det\nolimits_\zeta \Delta + 2 \zeta(0,\Delta) \log 2\pi.
\end{align*}
Since on closed compact manifolds $\zeta(0,\Delta) = - \dim \ker \Delta$, which 
equals $(-1)$ in the setting of tori, we conclude that Theorems \ref{main} and \ref{main-chi} are equivalent.

%**************************************************************************
\section{Interchanging regularized limits and integrals}\label{sec-integral}
%**************************************************************************

The main result of this section is presented in \cite[Lemma 3.3]{LesVer}, 
albeit under mildly stronger assumptions on the asymptotics. 

\begin{prop}
Let $f \in C^\infty(\R_+^2, \C), \R_+=(0,\infty),$ be homogeneous of order $d\in \C$ 
jointly in both variables and asymptotic expansions of the form \eqref{expansion}
in each of the variables $z,n\to \infty$ individually (with the other variable fixed)
\begin{equation}\label{exp}
\begin{split}
&f(z,1) = \sum_{j=1}^{N-1} \sum_{k=0}^{M_j} a_{jk} z^{\A_j}\log^k(z) + 
\sum_{k=0}^{M_0} a_{0k} \log^k(z) + o(z^{\A_N}\log^{M_N}(z)), \\
&f(1,n) = \sum_{j=1}^{N'-1} \sum_{k=0}^{M'_j} b_{jk} n^{\beta_j}\log^k(n) + 
\sum_{k=0}^{M'_0} b_{0k} \log^k(n) + o(n^{\beta_{N'}}\log^{M'_N}(n)),
\end{split}
\end{equation}
for some $N,N'\in \N$ and $(\A_j), (\beta_j) \subset \C$, 
such that $(\Re(\A_j)), (\Re(\beta_j))$
are monotonously decreasing sequences with
$\Re(\A_N) <-1, \Re(\beta_{N'}) <\min\{0,d+1\}$. Then
\begin{align}\label{int-lim}
\LIM_{n\to \infty} \regint_1^\infty f(z,n) dz = \regint_1^\infty \LIM_{n\to \infty} f(z,n) dz + \textup{Corr},
\end{align}
where $\textup{Corr}=\regint_0^\infty f(z,1) dz$ if $d=-1$ and zero otherwise.
\end{prop}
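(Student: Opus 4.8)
The plan is to exploit homogeneity to reduce everything to the one-variable expansions in \eqref{exp}. First I would write $f(z,n) = n^d f(z/n, 1)$, which converts the $z$-integral over $[1,\infty)$ into an integral over $[1/n,\infty)$ after the substitution $w = z/n$, namely $\regint_1^\infty f(z,n)\,dz = n^{d+1} \regint_{1/n}^\infty f(w,1)\,dw$. The strategy is then to split this as $n^{d+1}\bigl(\regint_0^\infty f(w,1)\,dw - \regint_0^{1/n} f(w,1)\,dw\bigr)$, where the first term is a genuine $n$-independent constant (it exists by the asymptotic assumptions as $w\to\infty$ together with the behaviour of $f(w,1)$ as $w\to 0$, which follows from the $n\to\infty$ expansion of $f(1,n)$ via homogeneity), and the second term is a small-interval integral that I would expand in $n$ by integrating the asymptotic expansion of $f(w,1)$ as $w\to 0$ term by term.

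The core of the argument is to identify the $\LIM_{n\to\infty}$ of this expression. The factor $n^{d+1}$ in front interacts with each power $w^{\gamma}$ appearing in the small-$w$ expansion of $f(w,1)$: integrating $w^\gamma \log^k w$ over $[0,1/n]$ produces terms of the shape $n^{-\gamma-1}(\text{polynomial in }\log n)$, so after multiplying by $n^{d+1}$ one gets $n^{d-\gamma}(\text{polynomial in }\log n)$. The regularized limit picks out precisely the constant term, i.e.\ the contribution of the exponent $\gamma = d$ with no logarithm. By homogeneity the small-$w$ expansion of $f(w,1)$ is dictated by the large-$n$ expansion of $f(1,n)$: the coefficient $b_{jk}$ of $n^{\beta_j}\log^k n$ corresponds to a term $\sim w^{d-\beta_j}\log^k w$ in $f(w,1)$ near $w=0$ (up to a sign on the log), so the exponent $\gamma = d$ arises exactly from $\beta_j = 0$. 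I would then check two cases: if $d \ne -1$, the term $n^{d+1}\regint_0^\infty f(w,1)\,dw$ has no constant term (its exponent $d+1\ne 0$ and there is no log), and one verifies that all the $\log$-power corrections from the $[0,1/n]$ piece with $\gamma=d$ are absent because a $w^d$ term in $f(w,1)$ near zero would, upon integration, give $w^{d+1}$, i.e.\ a nonzero power of $n$; this leaves $\regint_1^\infty \LIM_{n\to\infty} f(z,n)\,dz$ alone, where $\LIM_{n\to\infty} f(z,n)$ extracts the $\beta=0$ part of $f(1,n)$ scaled by homogeneity. If $d=-1$, the term $n^{d+1} = n^0 = 1$ makes $\regint_0^\infty f(w,1)\,dw = \regint_0^\infty f(z,1)\,dz$ survive intact as a constant, which is exactly the correction term $\mathrm{Corr}$, while the $[0,1/n]$ piece again contributes $\regint_1^\infty \LIM_{n\to\infty} f(z,n)\,dz$.

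A few technical points need care. I would first justify that $f(z,n)$, as a function of $z$ for fixed large $n$, has an asymptotic expansion of the form \eqref{expansion} as $z\to\infty$ with exponents and logarithmic degrees controlled uniformly enough that $\regint_1^\infty f(z,n)\,dz$ is well-defined for each $n$ and that its $n$-dependence again has an expansion of the form \eqref{expansion}; this is where the hypothesis $\Re(\alpha_N) < -1$ (for convergence/regularizability of the $z$-integral) and $\Re(\beta_{N'}) < \min\{0, d+1\}$ (to control the remainder after scaling by $n^{d+1}$ and to ensure the small-$w$ expansion of $f(w,1)$ is accurate enough) both enter. Second, the interchange of the finite asymptotic expansion of $f(w,1)$ near $w=0$ with the integral over $[0,1/n]$ must be done with an explicit remainder estimate, using the error term $o(n^{\beta_{N'}}\log^{M'_N} n)$ transported by homogeneity to an error $o(w^{d-\beta_{N'}}\log^{M'_N}(1/w))$ as $w\to 0$; integrating this over $[0,1/n]$ and multiplying by $n^{d+1}$ gives $o(n^{\beta_{N'} - d - 1 + d + 1}\cdots) = o(n^{\beta_{N'}}\cdots)$... more precisely one gets a term that is $o(1)$ because $\Re(\beta_{N'}) < 0$, so it does not affect the regularized limit. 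The main obstacle I anticipate is precisely this bookkeeping: matching the logarithmic monomials $z^{\alpha}\log^k z$ produced by the three sources (the bulk $\regint_0^\infty$ constant, the $[0,1/n]$ correction, and the definition of $\LIM$) and confirming that every term other than the claimed ones either carries a genuine nonzero power of $n$ (hence is killed by $\LIM$) or cancels between the two sides — in particular handling the borderline logarithmic contributions when some $\beta_j = 0$ with $M'_0 > 0$, and ensuring the "no blank line / balanced parenthesis" arithmetic of the substitution $w=z/n$ under the regularized-integral symbol is legitimate (which ultimately reduces to the fact that $\regint$ commutes with the affine change of variables up to the explicitly tracked boundary contributions at $w=1/n$).
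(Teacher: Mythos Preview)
Your approach is essentially the paper's own: use homogeneity to rewrite $\regint_1^\infty f(z,n)\,dz$ as $n^{d+1}$ times an integral of $f(\cdot,1)$, split off $\regint_0^\infty f(w,1)\,dw$, expand $\regint_0^{1/n} f(w,1)\,dw$ term by term via the small-$w$ asymptotics of $f(w,1)$ (obtained from the large-$n$ expansion of $f(1,n)$ through homogeneity), and then read off the $\LIM$. The case split $d=-1$ versus $d\neq -1$ and the identification of the $\beta_j=0$ contributions match the paper's computation exactly.

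There is one genuine gap. Your identity $\regint_1^\infty f(z,n)\,dz = n^{d+1}\regint_{1/n}^\infty f(w,1)\,dw$ is \emph{false} as written: regularized integrals do not transform naively under scaling. The paper invokes the change-of-variables rule \cite[Lemma~2.1.4]{Les:OFT} and obtains
\[
\regint_1^\infty f(z,n)\,dz \;=\; n^{d+1}\regint_{1/n}^\infty f(w,1)\,dw \;-\; a_{j_0 0}\,n^{d+1}\log n,
\]
where $a_{j_0 0}$ is the coefficient of $z^{-1}$ in the large-$z$ expansion of $f(z,1)$. The anomaly arises at $w=\infty$ (where the regularization lives), not at the finite endpoint $w=1/n$ as you suggest: scaling replaces $\log R$ by $\log R - \log n$ in the regularized-limit bookkeeping. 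This extra term is harmless for the final statement, since $n^{d+1}\log n$ has vanishing regularized limit for every $d$, but you need it to verify that $\regint_1^\infty f(z,n)\,dz$ genuinely has an expansion of the form \eqref{expansion} in $n$, and your closing remark mislocates the source of the correction.
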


\begin{proof}
Using \eqref{exp} and $\Re(\beta_{N'}) < 0$ we find for $f(z,n)=z^df(1,n/z)$
\begin{align*}
\LIM_{n\to \infty} f(z,n) = \sum_{k=0}^{M'_0} (-1)^k b_{0k} z^d \log^k(z).
\end{align*}
For any $\A\in \C$ and $k \geq 0$ we compute iteratively
\begin{equation}\label{integral}
\int z^\A \log^k (z) dz = \left\{
\begin{split}
&\sum_{j=0}^k \frac{(-1)^jk!}{(k-j)!} \frac{\log^{k-j}(z)}{(\A+1)^{j+1}} \, z^{\A+1} , &\ \textup{if} \ \A\neq -1, \\
&\frac{\log^{k+1}(z)}{(k+1)}, &\ \textup{if} \ \A = -1.
\end{split}
\right.
\end{equation}
Consequently we find
\begin{equation}\label{right}
\regint_1^\infty  \LIM_{n\to \infty} f(z,n) dz = \left\{
\begin{split}
&\sum_{k=0}^{M'_0} (-1) b_{0k} (d+1)^{-k-1}, &\ \textup{if} \ d\neq -1, \\
&0, &\ \textup{if} \ d = -1.
\end{split}
\right.
\end{equation}
For the computation of the left hand side in \eqref{int-lim}, we employ the coordinate change rule for 
regularized integrals, cf. \cite[Lemma 2.1.4]{Les:OFT}, and obtain 
\begin{align}\label{left1}
\regint_1^\infty f(z,n) dz &= n^{d+1} \regint_{1/n}^\infty f(x,1) dx - a_{j_00}n^{d+1}\log(n) \\
&= n^{d+1} \regint_0^\infty f(x,1) dx - n^{d+1} \regint_0^{1/n} f(x,1) dx - a_{j_00}n^{d+1}\log(n),
\end{align}
where $\A_{j_0}=-1$. Using \eqref{exp} and \eqref{integral} we arrive for $d\neq -1$ at the following expansion.
\begin{align*}
\regint_0^{1/n} f(x,1) dx &\sim_{n\to \infty} 
\sum_{j=1, \beta_j\neq d+1}^{N'-1} \sum_{k=0}^{M'_j} b_{jk} n^{\beta_j-(d+1)} \sum_{j=0}^k \frac{k!}{(k-j)!} \frac{\log^{k-j}(1/n)}{(d+1-\beta_j)^{j+1}} \\
& \qquad +\sum_{j=1, \beta_j= d+1}^{N'-1} \sum_{k=0}^{M'_j} b_{jk} \frac{\log^{k+1}(1/n)}{(k+1)} \\
& \qquad \quad +\sum_{k=0}^{M'_0} b_{0k} n^{-d-1}\sum_{j=0}^k \frac{k!}{(k-j)!} \frac{\log^{k-j}(1/n)}{(d+1)^{j+1}} \\
& \qquad \quad + o(n^{\beta_{N'}-(d+1)}\log^{M'_N}(n)).
\end{align*}
If $d=-1$ the asymptotic expansion changes slightly to 
\begin{align*}
\regint_0^{1/n} f(x,1) dx &\sim_{n\to \infty} 
\sum_{j=1, \beta_j\neq d+1}^{N'-1} \sum_{k=0}^{M'_j} b_{jk} n^{\beta_j-(d+1)} \sum_{j=0}^k \frac{k!}{(k-j)!} \frac{\log^{k-j}(1/n)}{(d+1-\beta_j)^{j+1}} \\
& \qquad +\sum_{j=1, \beta_j= d+1}^{N'-1} \sum_{k=0}^{M'_j} b_{jk} \frac{\log^{k+1}(1/n)}{(k+1)} \\
& \qquad \quad +\sum_{k=0}^{M'_0} (-1)^k b_{0k} \frac{\log^{k+1}(1/n)}{(k+1)} \\
& \qquad \quad + o(n^{\beta_{N'}-(d+1)}\log^{M'_N}(n)).
\end{align*}
We may now take regularized limit of \eqref{left1} as $n\to \infty$ and find using $\Re(\beta_{N'}) < d+1$
\begin{equation}\label{left}
\LIM_{n\to \infty} \regint_1^\infty f(z,n) dz = \left\{
\begin{split}
&\sum_{k=0}^{M'_0} (-1) b_{0k} (d+1)^{-k-1}, &\ \textup{if} \ d\neq -1, \\
&\regint_{0}^\infty f(x,1) dx, &\ \textup{if} \ d = -1.
\end{split}
\right.
\end{equation}
Statement follows from comparison of \eqref{left} and \eqref{right}.
\end{proof}

%**************************************************************************
\section{Polyhomogeneous expansion of the combinatorial resolvent trace}
%**************************************************************************
This section is devoted to a proof of Theorem \ref{trace-phg}, using the
Euler Maclaurin summation formula in $m$ parameters.
Write for $n\in \N_0$ and $x=(x_1,...,x_m) \in \R^m$
\begin{align*}
\w(n,x) :=\frac{n^2}{\pi^2} \sum_{i=1}^m
\sin^2\left(\frac{\pi x_i}{n}\right).
\end{align*}
Consider any tuple $J=(j_1,...,j_k)\subseteq \{1,...,m\}$ of pairwise 
distinct integers with $k\leq m$. We write $|J|:=k$ and put
$$
\{x_J=0\}:= \{x\in \N^m_0 \cap 
[0,n]^m \mid x_{j_1}=...=x_{j_k}=0\}.
$$
With respect to this notation we obtain for $\A\in \N$
and $z\in \R^+$ the following representation of the resolvent 
trace
\begin{align}\label{resolvent-trace-formula}
\textup{Tr}(\Delta_n+z^2)^{-m} = \sum_{k=0}^m (-1)^k 
\sum_{|J|=k} \sum_{\{x_J=0\}} (\w(n,x) + z^2)^{-m}.
\end{align}
The summand corresponding to $k=0$ is given explicitly by
$$
S(z,n) = \sum_{x_1=0}^{n} \cdots \sum_{x_m=0}^{n} 
\left(\w(n,x_1,...,x_m) + z^2\right)^{-m}.
$$
We employ the Euler Maclaurin formula for summation in $m$ 
parameters\footnote{obtained by iterating the standard Euler Maclaurin formula in one single summation 
parameter.} to derive a polyhomogeneous expansion of $S(z,n)$, 
the other terms in the resolvent trace formula \eqref{resolvent-trace-formula} 
are treated ad verbatim. For any $M\in \N$ we obtain
\begin{equation}\label{S}
S(z,n) = \sum_{\beta\in \{1,2,3,4\}^m} P_{\beta_1,1} 
\circ \cdots \circ P_{\beta_m,m}  \left(\w(n,x_1,...,x_m) + z^2\right)^{-m},
\end{equation}
where each $P_{\beta_j,j}$ acts in the $x_j$-variable on 
$u\in C^\infty[0,\infty)$ by
\begin{equation*}
P_{\beta_j,j} u := \left\{
\begin{split}
&\int_0^n u(x_j) dx_j, &\ \textup{if} \ \beta_j=1, \\
&\sum_{k=1}^M \frac{B_{2k}}{(2k)!} \left(\partial^{(2k-1)}_{x_j}|_{x_j=n} - \partial^{(2k-1)}_{x_j}|_{x_j=0}\right)u, &\ \textup{if} \ \beta_j=2, \\
&\frac{1}{(2M+1)!} \int_0^n B_{2M+1} (x_j - [x_j]) \partial^{(2M+1)}_{x_j} u(x_j) dx_j, &\ \textup{if} \ \beta_j=3, \\
&\frac{1}{2}\left(u(x_j=n) + u(x_j=0)\right), &\ \textup{if} \ \beta_j=4.
\end{split} \right.
\end{equation*}
Here, $B_i(x)$ denotes the $i$-th Bernoulli polynomial and $B_i$ the $i$-th Bernoulli number. 
We study the $(z,n)$-behaviour of the various summands. Note that for finite $n\in \N$ all 
individual operators in the composition $P_{\beta_1,1} \circ \cdots \circ P_{\beta_m,m}$ commute.

\begin{lemma}\label{H} Put $f(n,x,z):= \w(n,x)+z^2$. For each $k\in \N$ we may write
\begin{align*}
\partial_{x_j}^{(k)}f^{-\A} = \sum\limits_{\ell =0}^{k-1} H_{k,\ell}(\partial_{x_j}f) f^{-\A-\ell-1},
\end{align*}
where $H_{1,0}(\partial_{x_j}f) = -\A \partial_{x_j}f$ and higher functionals $H_{k,\ell}$ are defined recursively by
\begin{align*}
H_{k,\ell}(\partial_{x_j}f) = \partial_{x_j} H_{k-1,\ell}(\partial_{x_j}f) + H_{k-1,\ell-1} (\partial_{x_j}f) \cdot \partial_{x_j}f,
\end{align*}
where we set $H_{k,k}, H_{k,-1}=0$. For $k$ odd we have the estimate
\begin{equation}\label{sine-factor}
\left|H_{k,\ell}(\partial_{x_j}f)\right| \leq C_{k\ell} 
\left\{\begin{split}
&n^{-k+2(\ell +1)}, \ \textup{if} \ k \geq 2 (\ell +1), \\
&x^{2(\ell +1)-k}, \ \textup{if} \ k \leq 2 (\ell +1).
\end{split}\right.
\end{equation}
\end{lemma}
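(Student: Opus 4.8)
The plan is to prove the two claims of Lemma~\ref{H} separately: first the closed form for the iterated $x_j$-derivative of $f^{-\A}$ via Fa\`a di Bruno-type bookkeeping, then the estimate \eqref{sine-factor} on the coefficient functionals $H_{k,\ell}$ by induction on $k$, using the explicit form of $\w(n,x)$.

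For the first part, I would argue by induction on $k$. The base case $k=1$ is the chain rule: $\partial_{x_j}f^{-\A} = -\A(\partial_{x_j}f)f^{-\A-1}$, so $H_{1,0}(\partial_{x_j}f) = -\A\,\partial_{x_j}f$ and all other $H_{1,\ell}$ vanish. For the inductive step, I differentiate $\partial_{x_j}^{(k-1)}f^{-\A} = \sum_{\ell=0}^{k-2} H_{k-1,\ell}(\partial_{x_j}f) f^{-\A-\ell-1}$ once more in $x_j$. Each summand contributes two terms: $\bigl(\partial_{x_j}H_{k-1,\ell}\bigr)f^{-\A-\ell-1}$ and $H_{k-1,\ell}\cdot(-\A-\ell-1)(\partial_{x_j}f)\,f^{-\A-\ell-2}$. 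Collecting the coefficient of $f^{-\A-\ell-1}$ gives exactly $\partial_{x_j}H_{k-1,\ell} + (\text{const})\,H_{k-1,\ell-1}\cdot\partial_{x_j}f$; absorbing the constant $(-\A-(\ell-1)-1) = -(\A+\ell)$ into the definition of $H$ (which is a definition internal to the lemma, so I may take it as given) yields the stated recursion, with the convention $H_{k,k}=H_{k,-1}=0$ ensuring the endpoints of the $\ell$-sum behave correctly and the range becomes $0\le\ell\le k-1$.

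The substantive part is the estimate \eqref{sine-factor}. Note that $\partial_{x_j}f = \partial_{x_j}\w(n,x) = \frac{n^2}{\pi^2}\cdot\frac{2\pi}{n}\sin(\tfrac{\pi x_j}{n})\cos(\tfrac{\pi x_j}{n}) = \frac{n}{\pi}\sin(\tfrac{2\pi x_j}{n})$, and more generally $\partial_{x_j}^{(r)}\w(n,x)$ is $n^{2-r}$ times a bounded trigonometric function of $\tfrac{\pi x_j}{n}$; crucially, near $x_j=0$ each $\partial_{x_j}^{(r)}\w$ vanishes to order $\max(0,2-r)$ in $x_j$, since $\w\sim x_j^2$ there. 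Unwinding the recursion, $H_{k,\ell}$ is a sum of products of $(\ell+1)$ factors each of which is some derivative $\partial_{x_j}^{(r_i)}\w$ with $\sum_i r_i = k$ (one checks this weighting is preserved by the recursion: $\partial_{x_j}$ raises one $r_i$ by one, multiplication by $\partial_{x_j}f$ adds a new factor with $r=1$, and in both operations the count of factors and the total weight each go up by the right amount). Hence $|H_{k,\ell}|\le C_{k\ell}\prod_{i=1}^{\ell+1} n^{2-r_i}\,g_{r_i}(\tfrac{\pi x_j}{n})$ with $g_{r}$ bounded and vanishing to order $\max(0,2-r)$ at the origin. Taking the product and using $\sum r_i=k$ gives the factor $n^{2(\ell+1)-k}$; when $k\le 2(\ell+1)$ this exponent is nonnegative and we keep it, while when $k\ge 2(\ell+1)$ we instead exploit the vanishing: $\prod g_{r_i}$ vanishes near $0$ to total order $\sum\max(0,2-r_i)\ge 2(\ell+1)-k$, and for bounded arguments $|\sin(\tfrac{\pi x_j}{n})|\lesssim \tfrac{x_j}{n}$ converts each unit of vanishing order into a factor $x_j/n$; combining, $n^{2(\ell+1)-k}\cdot(x_j/n)^{2(\ell+1)-k} = x_j^{2(\ell+1)-k}$ as claimed. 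The hypothesis that $k$ is odd is what one should keep an eye on — it is presumably used to guarantee that at least one $r_i$ is odd so that the relevant $g_{r_i}$ genuinely vanishes at $x_j=0$ (an even factor like $\cos$ need not), sharpening the bound in the regime $k\le 2(\ell+1)$; I would make that parity argument explicit.

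The main obstacle I anticipate is the combinatorial bookkeeping in the second half: proving rigorously, by induction on $k$, that every monomial appearing in $H_{k,\ell}$ has exactly $\ell+1$ trigonometric factors with derivative-weights summing to $k$, and tracking the vanishing order at $x_j=0$ through the recursion $H_{k,\ell}=\partial_{x_j}H_{k-1,\ell}+H_{k-1,\ell-1}\cdot\partial_{x_j}f$. The differentiation term is the delicate one, since $\partial_{x_j}$ applied to a product can lower the vanishing order of one factor; one must check that the combined order over all factors still drops by at most one per derivative, matching the $x^{2(\ell+1)-k}$ budget. Everything else — the chain-rule induction and the trigonometric estimates $|\partial^{(r)}\w|\le C_r n^{2-r}$ and $|\sin t|\le |t|$ — is routine.
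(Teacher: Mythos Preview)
Your approach is essentially the same as the paper's: a routine induction establishes the recursive formula, and the estimate is obtained by tracking, in each monomial contributing to $H_{k,\ell}$, the number of factors of $g=\partial_{x_j}f$ and the number of derivatives distributed among them. The paper phrases this as a ``homogeneity order'' (each factor $g$ counts $+1$, each $\partial_{x_j}$ counts $-1$, so $H_{k,\ell}$ has order $2(\ell+1)-k$) and then reads off the bound from the series expansion of $g$; your explicit decomposition into products $\prod_{i}\partial_{x_j}^{(r_i)}\omega$ with $\ell+1$ factors and $\sum r_i=k$ is the same bookkeeping written out.

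Two corrections. First, you have the two cases interchanged. The crude product bound $\prod_i n^{2-r_i}=n^{2(\ell+1)-k}$ is the one to \emph{keep} when $k\ge 2(\ell+1)$ (the exponent is then nonpositive, giving the desired decay in $n$); it is in the regime $k\le 2(\ell+1)$ that you must exploit the vanishing of the sine factors, trading each unit of vanishing for a factor $x_j/n$ to convert $n^{2(\ell+1)-k}$ into $x_j^{2(\ell+1)-k}$. Your own displayed identity $n^{2(\ell+1)-k}(x_j/n)^{2(\ell+1)-k}=x_j^{2(\ell+1)-k}$ only makes sense with a nonnegative exponent, so the labels are evidently swapped rather than the reasoning being wrong.

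Second, your speculation about the role of ``$k$ odd'' is unnecessary. Your own counting argument, $p:=\#\{i:r_i=1\}\ge 2(\ell+1)-k$, uses no parity hypothesis, and indeed the estimate holds for all $k$; the paper's proof does not invoke oddness either. The restriction to odd $k$ in the lemma merely reflects that in the application the Euler--Maclaurin remainder $P_{3,j}$ differentiates to odd order $2M+1$.
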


\begin{proof}
The recursive structure of $\partial_{x_j}^{(k)}f^{-\A}$ follows by induction and the only intricate 
statement is the estimate of $H_{k,\ell}(g), g=\partial_{x_j}f$. Here we introduce a notion of \emph{homogeneity order}
for an expression $H_{k,\ell}(g)$ by counting for every individual summand in $H_{k,\ell}(g)$ 
each additional $\partial_{x_j}$ differentiation as lowering homogeneity
order by $(-1)$, and each factor of $g$ as increasing homogeneity order by $(+1)$. 
\medskip

With this system, 
$H_{1,0}(g) = -\A g$ is of homogeneity order $1$, and by induction 
$H_{k,\ell}(g)$ is of homogeneity order $(2(\ell+1)-k)$, where in total we count $(\ell+1)$ factors of 
$g$, and $(k-(\ell+1))$ derivatives. Note the series expansion
\begin{align*}
g = \frac{n}{\pi} \sin \left(\frac{\pi x_j}{n}\right) \cos \left(\frac{\pi x_j}{n}\right)
= n \sum_{i=0}^\infty a_i \left(\frac{\pi x_j}{n}\right)^{2i+1},\quad
\partial_{x_j} g = \sum_{i=0}^\infty a_i (2i+1) \left(\frac{\pi x_j}{n}\right)^{2i}.
\end{align*}
Consequently, each $g$ in $H_{k,\ell}(g)$ carries a factor of $x_j$, a single additional 
$\partial_{x_j}$ differentiation applied to $g$, annihilates that $x_j$ factor, 
and each further derivative adds a factor of $n^{-1}$. Consequently, 
homogeneity order simply counts the powers of $x_j$ and $n$ in 
the expression for $H_{k,\ell}(g)$, leading to the statement. 
\end{proof}

First, we derive a polyhomogeneous expansion for the summand $S(z,n)$.

\begin{prop}\label{S-expansion}
The function $S(n,z)$ admits a partial polyhomogeneous expansion 
$$S(n,z) = \sum_{j=0}^{m} h'_{-m-j}(z,n) + H'(z,n),$$
where each $h'_{-m-j}\in C^\infty(\R_+^2)$ is homogeneous of order $(-m-j)$ jointly in $(z,n)$, 
$h'_{-m-j}(z, 1)$ and $h'_{-m-j}(1, n)$ admit an asymptotic expansion of the form
\eqref{expansion} as $z, n\to \infty$, respectively. 
The remainder term satisfies $H'_N(z,n) = O(z^{-2m-2})$,  as $z\to \infty$, 
uniformly in $n>0$. Moreover, $h'_{-2m}(z,n) = z^{-2m}$.
\end{prop}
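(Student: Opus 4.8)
## Proof Strategy for Proposition \ref{S-expansion}

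The plan is to analyze the $4^m$ summands in the Euler–Maclaurin expansion \eqref{S}, sort them by their joint homogeneity behaviour in $(z,n)$, and show that all but finitely many are collected into the remainder $H'$ while the rest organize into the homogeneous terms $h'_{-m-j}$. The central observation is that each operator $P_{\beta_j,j}$ has a definite effect on homogeneity: $P_{1,j}$ (integration $\int_0^n \cdots dx_j$) raises the joint homogeneity order by $1$, the boundary-derivative operator $P_{2,j}$ produces via Lemma \ref{H} a sum of terms whose orders are shifted by successively more negative even integers (the estimate \eqref{sine-factor} with $x=0$ or $x=n$ gives factors $n^{-(2k-1)+2(\ell+1)}$), the integral-remainder operator $P_{3,j}$ contributes a term of order $\leq -(2M+1)+2M+1 = 0$ relative shift — strictly more negative for $M$ large — and $P_{4,j}$ (half the boundary values) leaves homogeneity unchanged but, crucially, kills the $z^2$-dependence down to the same order. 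I would set up a bookkeeping lemma: applying $P_{\beta_1,1}\circ\cdots\circ P_{\beta_m,m}$ to $(\w(n,x)+z^2)^{-m}$ produces, after expanding the $\partial^{(2k-1)}$'s via Lemma \ref{H}, a finite sum of terms each of which is jointly homogeneous in $(z,n)$ of a computable order $-m - (\text{shift})$, where the shift is a nonnegative integer determined by the multiset $\beta$ and the chosen indices $\ell$ in the functionals $H_{k,\ell}$.

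The key steps, in order. First, I would establish the homogeneity claim for the purely-integral term, $\beta = (1,\ldots,1)$: here $S_0(z,n) = \int_{[0,n]^m}(\w(n,x)+z^2)^{-m}\,dx$, and the substitution $x_i = n y_i$ together with the scaling $\w(n,ny) = n^2 \pi^{-2}\sum \sin^2(\pi y_i) =: n^2\,\omega(y)$ gives $S_0(z,n) = n^m \int_{[0,1]^m}(n^2\omega(y)+z^2)^{-m}\,dy$, which is manifestly jointly homogeneous of order $-m$ in $(z,n)$; this is the leading term $h'_{-m}$. I would then record the $z\to\infty$ and $n\to\infty$ asymptotics of $S_0(z,1)$ and $S_0(1,n)$ — for the former, expanding $(z^2 + \omega(y))^{-m}$ in powers of $z^{-2}$ and integrating term by term yields a clean power series in $z^{-2}$ with a leading $z^{-2m}$; for the latter one uses the standard small-parameter asymptotics of $\int_{[0,1]^m}(n^2\omega(y)+1)^{-m}dy$ as $n\to\infty$, controlled by the behaviour of $\omega$ near its zeros — this gives an expansion of the form \eqref{expansion}. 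Second, for a general $\beta \neq (1,\ldots,1)$, I would show that each resulting term is jointly homogeneous of some order $-m-j$ with $j\geq 1$, again via the $x_i = ny_i$ rescaling, and that its $z\to\infty$ behaviour is $O(z^{-2m-2})$ once enough integrations-by-parts (large $M$) have peeled off the low-order contributions; the ones with $1 \leq j \leq m$ are retained as the $h'_{-m-j}$, and everything with $j > m$, together with the $P_{3,\cdot}$-remainders, goes into $H'$. Third, I would verify that the $h'_{-m-j}(z,1)$, $h'_{-m-j}(1,n)$ each have expansions of the form \eqref{expansion} — this follows because each is a finite combination of boundary terms of the shape $n^{a}\int_0^1(\cdots)dy$ or products thereof, whose asymptotics are again governed by Taylor expansion in $z^{-2}$ and by the zero-structure of $\sin^2$. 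Finally, $h'_{-2m}(z,n) = z^{-2m}$ is extracted by identifying which $\beta$ and which $\ell$-choices contribute at order $-2m$: the dominant such contribution comes from the corner term $\beta=(4,\ldots,4)$ evaluated appropriately, or more precisely from the constant-in-$x$ piece $\w(n,0)=0$, leaving $(0+z^2)^{-m} \cdot \text{(combinatorial factor)}$; one checks the factor is exactly $1$.

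The main obstacle I expect is the bookkeeping in the second step: tracking, across the noncommutative-looking but actually commuting composition $P_{\beta_1,1}\circ\cdots\circ P_{\beta_m,m}$, exactly which homogeneity orders appear and showing no order in the range $[-2m, -m-1]$ is missed or double-counted, while simultaneously proving the uniform-in-$n$ remainder bound $H'_N(z,n) = O(z^{-2m-2})$. The delicate point is that the estimate \eqref{sine-factor} is a \emph{bound}, not an identity, so to get genuine homogeneity of the retained terms (rather than just size estimates) I need to extract the exact leading homogeneous part of each $H_{k,\ell}(\partial_{x_j}f)$ from its power-series expansion — the lemma's proof already identifies this as "homogeneity order counts powers of $x_j$ and $n$," so the honest statement is that $H_{k,\ell}(\partial_{x_j}f)$ is itself a convergent series of jointly homogeneous terms of orders $2(\ell+1)-k, 2(\ell+1)-k+2, \ldots$, and I would feed this structure into the term-by-term analysis. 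Controlling the tail of these series uniformly in $n$ — so that the truncation error lands in $O(z^{-2m-2})$ — is where the choice of $M$ large (depending on $m$) enters, and writing that dependence cleanly is the fiddly part.
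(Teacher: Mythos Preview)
Your overall architecture is right---sort the Euler--Maclaurin summands by $\beta$, identify the integral term $\beta=(1,\dots,1)$ as the leading homogeneous piece via $x_j=ny_j$, push the $P_{3,j}$ contributions into the remainder, and read off $h'_{-2m}=z^{-2m}$ from $\beta=(4,\dots,4)$. But you have missed the single observation that makes the proof short, and its absence is exactly the ``main obstacle'' you flag at the end.

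The point is that every summand with some $\beta_j=2$ \emph{vanishes identically}. The operator $P_{2,j}$ evaluates odd-order derivatives $\partial_{x_j}^{(2k-1)}(\w(n,x)+z^2)^{-m}$ at $x_j=0$ and $x_j=n$. Since $\w(n,x)$ depends on $x_j$ only through $\sin^2(\pi x_j/n)$, the function $(\w(n,x)+z^2)^{-m}$ is even in $x_j$ about both $x_j=0$ and $x_j=n$, so all its odd derivatives vanish there. Equivalently, every term in $\partial_{x_j}^{(2k-1)}f^{-m}$ carries at least one factor of $\sin(\pi x_j/n)$ after differentiation. Hence there is nothing to track from the $P_{2,j}$ pieces at all.

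Once this is seen, your bookkeeping collapses: the only surviving $\beta$'s are those with $\beta_j\in\{1,3,4\}$. The paper then shows directly, using Lemma \ref{H} and the elementary estimates $x_j/(\w+z^2)\leq C z^{-1}$ and $\int_{[0,n]^m}(\w+z^2)^{-m}\,dx\leq(2\pi)^m$, that any term containing at least one $P_{3,j}$ is $O(z^{-2m-2})$ uniformly in $n$ (for $M$ large enough, e.g.\ $2M+1\geq 3m+2$). What remains are only the $\beta\in\{1,4\}^m$ terms. Each $P_{4,j}$ is evaluation, reducing the problem to the same integral analysis with $m$ replaced by $m-k$ where $k=\#\{j:\beta_j=4\}$; the resulting term is homogeneous of order $-2m+(m-k)=-m-k$. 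This gives exactly one homogeneous term of each order $-m,-m-1,\dots,-2m$, with the last being $P_{4,1}\circ\cdots\circ P_{4,m}(\w+z^2)^{-m}=z^{-2m}$.

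So the delicate extraction of ``exact leading homogeneous parts'' of $H_{k,\ell}(\partial_{x_j}f)$ that you anticipate is never needed: Lemma \ref{H} is used only as a size estimate for the $P_{3,j}$ remainder, not to produce homogeneous terms. Your proposed series expansion of $H_{k,\ell}$ into joint-homogeneous pieces would work in principle but is doing far more than the proposition requires.
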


\begin{proof}
Since $\partial_{x_j}$ differentiation of odd order applied to $( \w(n,x)+z^2)^{-m}$,
always leads to factors $\sin(\pi x /n)$, summands in 
\eqref{S} with $\beta_j=2$ for some $j=1,...,m$ vanish.
We are left to consider summands in \eqref{S} with $\beta_j\in \{1,3,4\}$.
Consider first summands in \eqref{S} with $\beta_j=3$ for some $j=1,...,m$.
We will use the following basic estimates
\begin{align}
\frac{x_j}{(\w(n,x) + z^2)} \leq C \frac{x_j}{x_j^2 + z^2} &\leq C (2z)^{-1}, \label{basic1}\\
\int_{[0,n]^m} \left(\w(n,x) + z^2\right)^{-m} d^mx
 &\leq \left( \int_0^n \left( \frac{n^2}{\pi^2} \sin^2 \left(\frac{\pi x_j}{n}\right) + 
z^2\right)^{-1} dx_j\right)^m \label{basic2} \\ &= \left( \frac{2n}{\sqrt{z^2+n^2/\pi^2}}\right)^m
\leq (2\pi)^m, \nonumber
\end{align}
where we denote all universal constants in the estimates by the same symbol $C>0$.
Let $k=2M+1\geq 3m+2$ be some sufficiently large integer. 
By Lemma \ref{H} (assume $\A\geq m$) we obtain in case $k\leq 2(\ell + 1)$
\begin{equation*}
\begin{split}
&\int_{[0,n]^m} \left| H_{k,\ell}(\partial_{x_j}f) f^{-\A-\ell-1} \right|
\\ &\qquad \qquad \leq \int_{[0,n]^m} \frac{C x^{2(\ell +1)-k}}{(\w(n,x) + z^2)^{2(\ell +1)-k}}
\left(\w(n,x) + z^2\right)^{-(\A+k-(\ell +1))} d^m x
\\ &\qquad \qquad \leq C z^{-2(\ell +1)+k}\int_{[0,n]^m} 
\left(\w(n,x) + z^2\right)^{-(\A+k-(\ell +1))} d^m x
\\ &\qquad \qquad \leq C z^{-k}\int_{[0,n]^m} 
\left(\w(n,x) + z^2\right)^{-\A} d^m x
\\ &\qquad \qquad \leq C z^{-k-2(\A-m)}\int_{[0,n]^m} 
\left(\w(n,x) + z^2\right)^{-m} d^m x \leq C (2\pi)^m z^{-2\A-2},
\end{split}
\end{equation*}
where we used \eqref{basic1} in the second inequality, 
and employed \eqref{basic2} together with $k\geq 2m+2$ in the last inequality.
In the case $k> 2(\ell + 1)$, we compute similarly 
\begin{equation*}
\begin{split}
\int_{[0,n]^m} \left| H_{k,\ell}(\partial_{x_j}f) f^{-\A-\ell-1} \right|
&\leq C n^{-k+2(\ell +1)} \int_{[0,n]^m} 
\left(\w(n,x) + z^2\right)^{-\A-(\ell +1)} d^m x
\\ &\leq C \left\{\begin{split} &z^{-2\A-2}\int_{[0,n]^m} 
\left(\w(n,x) + z^2\right)^{-m} d^m x, \ \textup{if} \ \ell \geq m,
\\ &n^{-k+2(\ell +1)} z^{-2\A-2} \int_{[0,n]^m} 1 d^m x,\ \textup{if} \ \ell < m, 
\end{split}\right. \\
&\leq C' z^{-2\A-2},
\end{split}
\end{equation*}
where we used $k\geq 3m$ and \eqref{basic2} in the final estimate.
Consequently, if $\beta_j=3$ for some single $j=1,...,m$, we may estimate
for $M\geq (3m+1)/2$ uniformly in $n>0$
$$
\left| P_{\beta_1,1} 
\circ \cdots \circ P_{\beta_m,m}  \left(\w(n,x_1,...,x_m) + z^2\right)^{-\A}\right|
\leq C z^{-2\A-2}.
$$
If $\# \{\beta_j=3\}>1$, the estimates proceed along the same lines. 
Consider next the case with $\beta_j=1$ for all $j=1,...,m$. 
The corresponding summand is given by the following integral, where we 
substitute $y_j = x_j /n$
$$
P_{\beta_1,1} 
\circ \cdots \circ P_{1,m}  \left(\w(n,x_1,...,x_m) + z^2\right)^{-\A}
= n^{-2\A+m} \int_{[0,1]^m} \left(\w(1,y) + (z/n)^2\right)^{-m} d^my.
$$
the expression is homogeneous of order $(-2\A+m)$ jointly in $(n,z)$.
Existence of an asymptotic expansion for the homogeneous terms
$h'_{-2\A+m}(\cdot, 1), h'_{-2\A+m}(1, \cdot )$ follows e.g. from 
a careful application of Melrose's push forward theorem\footnote{Note that 
$\left(\w(1,y) + (z/n)^2\right)^{-m}$ lifts to a polyhomogeneous function on the
blowup space $[[0,1]^m\times \R^+, \mathscr{A}]$, blown up at the corners
$\mathscr{A}=\{(y,t) \mid y_j \in \{0,1\}, t \in \{0,\infty\}\}$. Pushforward theorem 
of Melrose \cite{Mel:COC, Mel:TAP} then yields an asymptotic expansion of the integral as $t\to 0$
or $t\to \infty$. The explicit structure of the expansion is irrelevant in our discussion.}.\medskip

It remains to discuss terms with $\beta_j=4$ for some $j \in \{1,...,m\}$.
Each $P_{4,j}$ is an evaluation operator and in fact for any family
of pairwise distinct integers $J=(j_1,...,j_k)\subset \{1,...,m\}$ we have
$$
P_{4,j_1} 
\circ \cdots \circ P_{4,j_k}  \left(\w(n,x_1,...,x_m) + z^2\right)^{-\A}
= (\w_J(n,y) + z^2)^{-\A},
$$ 
where we have introduced $y=(y_1,...,y_{m-k}):=(x_{j_{k+1}},...,x_{j_m})$ and write
\begin{align*}
\w_J(n,y) :=\frac{n^2}{\pi^2} \sum_{i=1}^{m-k}
\sin^2\left(\frac{\pi y_i}{n}\right).
\end{align*}
Consequently, analysis of the terms with $\beta_j=4$ for $j \in J$
proceeds along the lines above, with $m$ simply replaced by $(m-k)$.
This leads to homogeneous terms of homogeneity order $(-2\A+m-k)$.
Setting $\A=m$ proves the statement, once we observe that the homogeneous
term of order $(-2m)$ is given explicitly by
$$
P_{4,1} 
\circ \cdots \circ P_{4,m}  \left(\w(n,x_1,...,x_m) + z^2\right)^{-m}
= z^{-2m}.
$$ 
\end{proof}

We may now prove our first main result.

\begin{theorem}\label{mainthm} 
The resolvent trace admits a partial polyhomogeneous expansion
$$\textup{Tr}(\Delta_n+z^2)^{-m} = \sum_{j=0}^{m} h_{-m-j}(z,n) + H(z,n),$$
where each $h_{-m-j}\in C^\infty(\R_+^2)$ is homogeneous of order $(-m-j)$ jointly in $(z,n)$, 
$h_{-m-j}(z, 1)$ and $h_{-m-j}(1, n)$ admit an asymptotic expansion of the form
\eqref{expansion} as $z, n\to \infty$, respectively. 
The remainder term satisfies $H_N(z,n) = O(z^{-2m-2})$,  as $z\to \infty$, 
uniformly in $n>0$. Moreover, $h_{-2m}(z,n) = 0$.
\end{theorem}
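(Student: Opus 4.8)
The plan is to deduce Theorem~\ref{mainthm} directly from the resolvent trace formula \eqref{resolvent-trace-formula} together with the already established expansion of the leading summand $S(z,n)$ in Proposition~\ref{S-expansion}. Recall that
\[
\textup{Tr}(\Delta_n+z^2)^{-m} = \sum_{k=0}^m (-1)^k \sum_{|J|=k} \sum_{\{x_J=0\}} (\w(n,x) + z^2)^{-m},
\]
where the $k=0$ term is exactly $S(z,n)$. First I would observe that each inner sum $\sum_{\{x_J=0\}} (\w(n,x)+z^2)^{-m}$ is, after setting the $|J|=k$ coordinates to zero, a sum of the same structural type as $S(z,n)$ but in $(m-k)$ summation variables, namely $\sum_{x_i, i\notin J} (\w_J(n,x')+z^2)^{-m}$ with $\w_J$ the truncated symbol. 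Applying the Euler Maclaurin machinery of \eqref{S}--\eqref{sine-factor} and the argument of Proposition~\ref{S-expansion} verbatim to this reduced sum — the proof there was in fact carried out for general exponent $\A$ and arbitrary number of variables via the $P_{4,j}$ reduction — yields for each such term a partial polyhomogeneous expansion in $(z,n)$ whose homogeneity orders lie in $\{-m-k, -m-k-1, \dots, -2m\}\cup\{\text{remainder } O(z^{-2m-2})\}$, with the top-order piece (homogeneity $-2(m-k)$... wait, note this equals $-2m$ only when $k=0$) actually being of order $-m-k$ or lower since we keep $\A=m$ fixed. I would make this precise by noting that the homogeneity orders produced for the reduced $(m-k)$-variable sum, with exponent $\A=m$, range between $-2m$ (from all $\beta=4$) and $-m-k$... here I must be careful: the all-integral term gives order $-2m + (m-k) = -m-k$, matching the claimed range.

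Next I would collect terms: the finite collection of homogeneous pieces $h_{-m-j}(z,n)$, $j=0,\dots,m$, in the statement is obtained by grouping, across all $k$ and all $J$ with $|J|=k$, the homogeneous constituents of the same order. Each is smooth on $\R_+^2$, homogeneous of the claimed order, and its restrictions to $z=1$ and $n=1$ inherit asymptotic expansions of the form \eqref{expansion} from Proposition~\ref{S-expansion} and the Melrose pushforward argument cited there (applied to the reduced configuration space $[[0,1]^{m-k}\times\R^+,\mathscr{A}]$). The remainder $H(z,n)$ is the sum of the finitely many remainder terms, hence still $O(z^{-2m-2})$ as $z\to\infty$ uniformly in $n>0$, since uniformity is preserved under finite sums.

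The key point specific to this theorem — the difference from Proposition~\ref{S-expansion} — is the final assertion $h_{-2m}(z,n)=0$. Here I would track the coefficient of homogeneity order $-2m$. From Proposition~\ref{S-expansion}, the $k=0$ summand $S(z,n)$ contributes $h'_{-2m}(z,n)=z^{-2m}$, coming from the single $P_{4,1}\circ\cdots\circ P_{4,m}$ term. For a fixed $J$ with $|J|=k\geq 1$, the order $-2m$ piece of $\sum_{\{x_J=0\}}(\w(n,x)+z^2)^{-m}$ again comes only from the all-$\beta=4$ contribution in the $(m-k)$ remaining variables, which equals $(\w(n,0)+z^2)^{-m}$ evaluated with all arguments zero, i.e.\ $z^{-2m}$. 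There are $\binom{m}{k}$ such $J$, so the total order $-2m$ contribution is
\[
\sum_{k=0}^m (-1)^k \binom{m}{k} z^{-2m} = z^{-2m}\sum_{k=0}^m (-1)^k\binom{m}{k} = 0,
\]
by the binomial identity, for $m\geq 1$. Hence $h_{-2m}\equiv 0$. I expect the main obstacle to be purely bookkeeping: verifying that, when the $P_{4,j}$ reduction is applied to the summand indexed by $J$, the order $-2m$ part really is \emph{only} the all-evaluation term $z^{-2m}$ and receives no contribution from terms where some $\beta_j\in\{1,3\}$ — but this follows because any $P_{1,j}$ raises the homogeneity order by $+1$ relative to keeping that variable at a point (the integral $\int_0^n$ contributes a factor of $n$ scaling), and any $P_{3,j}$ term was shown in Proposition~\ref{S-expansion} to be absorbed into the $O(z^{-2m-2})$ remainder; so the only way to reach the top order $-2m$ is to evaluate every variable, i.e.\ $\beta_j=4$ for all $j$. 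Everything else is a transcription of the already-completed argument.
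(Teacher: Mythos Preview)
Your proposal is correct and follows essentially the same approach as the paper: reduce each $J$-indexed summand in \eqref{resolvent-trace-formula} to an $(m-k)$-variable analogue of $S(z,n)$ with exponent $\A=m$, invoke the analysis of Proposition~\ref{S-expansion} verbatim, and then kill $h_{-2m}$ via the binomial identity $\sum_{k=0}^m(-1)^k\binom{m}{k}=0$. Your closing paragraph explaining why only the all-$\beta_j=4$ term contributes at order $-2m$ is a useful elaboration, but the core argument is identical to the paper's.
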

\begin{proof}
We need to extend the computations in Proposition \ref{S-expansion} to general terms
in the expression \eqref{resolvent-trace-formula}. Consider any 
tuple $J=(j_1,...,j_k)\subset \{1,...,m\}$ of pairwise 
distinct integers with $k\leq m$ and the corresponding term 
$$
\sum_{\{x_J=0\}} (\w(n,x) + z^2)^{-m}.
$$
Put $y=(y_1,...,y_{m-k}):=(x_{j_{k+1}},...,x_{j_m})$ and note
$$
\sum_{\{x_J=0\}} (\w(n,x) + z^2)^{-m} = 
\sum_{y \in [0,n]^{m-k}} (\w_J(n,y) + z^2)^{-(m-k)-k},
$$ 
where we have introduced
\begin{align*}
\w_J(n,y) :=\frac{n^2}{\pi^2} \sum_{i=1}^{m-k}
\sin^2\left(\frac{\pi y_i}{n}\right).
\end{align*}
The computations follow for these terms along the lines 
of Proposition \ref{S-expansion} with 
$m$ replaced by $(m-k)$ and $\A=m$.
It remains to discuss the homogeneous term of order $(-2m)$.
Note by the statement of Proposition \ref{S-expansion} on the 
$(-2m)$-homogeneity terms
\begin{align*}
h_{-2m}(z,n) = \sum_{k=0}^m (-1)^k 
\sum_{|J|=k} z^{-2m} = \sum_{k=0}^m (-1)^k  \left(
\begin{array}{c} m \\ k \end{array} \right)z^{-2m} = 0.
\end{align*}
This proves the statement.
\end{proof}

%**************************************************************************
\section{Convergence of the combinatorial resolvent traces}
%**************************************************************************

In this section we prove Proposition \ref{trace-limit}, using 
an argument of Dodziuk \cite{Dod:FDA} on convergence of 
zeta functions.

\begin{prop} For any integer $\A\geq m$
$$\lim_{n\to \infty} \partial_z^{(k)} \, \textup{Tr}
(\Delta_n+z^2)^{-\A} = \partial_z^{(k)} \, 
\textup{Tr}(\Delta+z^2)^{-\A}.$$
\end{prop}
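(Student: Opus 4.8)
The plan is to establish the claimed convergence of the resolvent traces together with all their $z$-derivatives by a direct spectral comparison, following Dodziuk's strategy. First I would observe that both sides are honest convergent sums since $\A \geq m$: on the discrete side,
\[
\textup{Tr}(\Delta_n+z^2)^{-\A} = \sum_{k_1=0}^{n-1}\cdots\sum_{k_m=0}^{n-1}\bigl(\w(n,k_1,\dots,k_m)+z^2\bigr)^{-\A},
\]
and on the smooth side the eigenvalues of $\Delta$ on $\T^m$ are $|\xi|^2 = \xi_1^2+\cdots+\xi_m^2$ for $\xi \in \Z^m$, so $\textup{Tr}(\Delta+z^2)^{-\A} = \sum_{\xi\in\Z^m}(|\xi|^2+z^2)^{-\A}$ converges absolutely for $2\A > m$. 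Next I would set up the natural index identification: for fixed $n$, write each summation index $k_i \in \{0,1,\dots,n-1\}$ in the form $\xi_i \in (-n/2, n/2]\cap\Z$ (i.e. choose the representative of smallest absolute value), so that the discrete sum becomes a sum over the finite box $B_n := \bigl((-n/2,n/2]\cap\Z\bigr)^m \subset \Z^m$. The point of this choice is that for $|\xi_i| \leq n/2$,
\[
\w(n,\xi) = \frac{n^2}{\pi^2}\sum_{i=1}^m \sin^2\!\left(\frac{\pi \xi_i}{n}\right) \longrightarrow \sum_{i=1}^m \xi_i^2 = |\xi|^2 \qquad (n\to\infty),
\]
pointwise in $\xi$, using $\frac{n}{\pi}\sin(\pi\xi_i/n) \to \xi_i$.

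The core of the argument is then a dominated-convergence comparison on the lattice $\Z^m$. For fixed $z\in\R_+$ and fixed $k\in\N$, I would consider the term-by-term difference
\[
\partial_z^{(k)}\textup{Tr}(\Delta_n+z^2)^{-\A} - \partial_z^{(k)}\textup{Tr}(\Delta+z^2)^{-\A}
= \sum_{\xi\in B_n} \partial_z^{(k)}\bigl(\w(n,\xi)+z^2\bigr)^{-\A} - \sum_{\xi\in\Z^m}\partial_z^{(k)}\bigl(|\xi|^2+z^2\bigr)^{-\A},
\]
where $\partial_z^{(k)}(\lambda+z^2)^{-\A}$ is a finite linear combination of terms $z^{j}(\lambda+z^2)^{-\A-\ell}$ with $0\le \ell \le k$, hence bounded by $C_k(1+z^2)^{k}(\lambda+z^2)^{-\A}$ uniformly. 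The key elementary estimate, valid for $|\xi_i|\le n/2$, is the two-sided bound
\[
\frac{2}{\pi^2}\,\xi_i^2 \;\le\; \frac{n^2}{\pi^2}\sin^2\!\left(\frac{\pi\xi_i}{n}\right) \;\le\; \xi_i^2,
\]
which follows from $\tfrac{2}{\pi}|t| \le |\sin t| \le |t|$ for $|t|\le \pi/2$; summing over $i$ gives $c\,|\xi|^2 \le \w(n,\xi)\le |\xi|^2$ with $c=2/\pi^2$ independent of $n$. This furnishes a summable majorant $C_{k,z}\,(1+|\xi|^2/\pi^2)^{-\A+k}$ for $2(\A-k)>m$ — which, since $\A$ can be taken as large as we like while $k$ is fixed, is no restriction; more carefully, one fixes $\A' > \A$ large enough that $2(\A'-k)>m$ and runs the argument, then deduces the statement for all $\A\ge m$ by noting the claim for $\A$ follows from that for larger exponents via the same majorant applied with the actual $\A$. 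Extending each summand by zero outside $B_n$, the pointwise convergence $\w(n,\xi)\to|\xi|^2$ together with this uniform majorant and the discrete dominated convergence theorem gives the result for each fixed $z$ and $k$.

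The main obstacle, and the only place requiring care, is making the convergence work \emph{uniformly enough} to conclude $\partial_z^{(k)}$ of the limit equals the limit of $\partial_z^{(k)}$ — i.e. that one may interchange $\lim_{n\to\infty}$ with $\partial_z^{(k)}$. The cleanest route is to prove the $k=0$ statement (convergence of $\textup{Tr}(\Delta_n+z^2)^{-\A}$) \emph{locally uniformly in $z$ on $\R_+$} by the majorant argument above — the majorant $C\,(1+|\xi|^2/\pi^2)^{-\A}$ can be chosen independent of $z$ on compact subsets of $\R_+$, and the tail of the lattice sum is uniformly small — and similarly establish that the sequence of derivatives $\partial_z^{(k)}\textup{Tr}(\Delta_n+z^2)^{-\A}$ converges locally uniformly on $\R_+$ for every $k$. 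Since each $\textup{Tr}(\Delta_n+z^2)^{-\A}$ is smooth in $z$ (a finite sum of smooth functions) and all derivative-sequences converge locally uniformly, the standard theorem on differentiating a uniformly convergent sequence of functions yields that the limit $\textup{Tr}(\Delta+z^2)^{-\A}$ is smooth with $\partial_z^{(k)}$ commuting with $\lim_n$, which is precisely the assertion. One should also remark that the box $B_n$ genuinely exhausts $\Z^m$ (every $\xi\in\Z^m$ lies in $B_n$ once $n > 2\max_i|\xi_i|$), so the "missing" lattice points contribute a tail that is controlled by the same summable majorant and tends to $0$.
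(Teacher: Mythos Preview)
Your argument is correct but takes a different route from the paper in both parts. For the $k=0$ case the paper does not use the explicit spectrum at all: it orders the eigenvalues as $\lambda_{0,n}\le\lambda_{1,n}\le\cdots$ and $\lambda_0\le\lambda_1\le\cdots$, invokes the minimax principle (following Dodziuk) to obtain the one-sided monotonicity $\lambda_{k,n}\le\lambda_k$ for each fixed $k$, and then runs a standard $\varepsilon/3$ tail argument; this has the virtue of applying verbatim to any family with monotonically convergent ordered spectrum, whereas your re-indexing over the box $B_n$ together with the explicit two-sided bound $\tfrac{2}{\pi^2}|\xi|^2\le\w(n,\xi)\le|\xi|^2$ gives a concrete $n$-independent majorant and reduces everything to discrete dominated convergence, which is more self-contained in the present setting. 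For the derivatives the paper's argument is considerably shorter: it simply observes that $\partial_z\,\textup{Tr}(\Delta_n+z^2)^{-\A}$ is a constant multiple of $z\,\textup{Tr}(\Delta_n+z^2)^{-\A-1}$, so the $k$-th derivative statement reduces immediately to the $k=0$ case with $\A$ replaced by $\A+k$. Your local-uniform-convergence route also works, but your stated majorant exponent $-\A+k$ and the accompanying worry about $2(\A-k)>m$ are misplaced: the terms in $\partial_z^{(k)}(\lambda+z^2)^{-\A}$ are of the form $z^{2\ell-k}(\lambda+z^2)^{-\A-\ell}$ with $\ell\ge k/2$, so each $z$-derivative only \emph{increases} the resolvent power, and the single majorant $(c|\xi|^2+z^2)^{-\A}$ already dominates all derivatives for $z$ in compact subsets of $\R_+$.
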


\begin{proof}
The argument does not depend on the explicit structure 
of the spectra for $\Delta_n$ and $\Delta$. Hence we write
\begin{align*}
&\sigma (\Delta_n) = \left\{ \lambda_{k,n} \mid k=0,...,N(n)\right\}. \\
&\sigma (\Delta) = \left\{\lambda_k \mid k \in \N_0 \right\},
\end{align*}
where both sets are ordered in an ascending order, 
$N(n)$ is the total number of $\Delta_n$-eigenvalues counted
with their multiplicities, $N(n)\to \infty$ monotonously increasing 
and $\lambda_{k,n} \to \lambda_k$ as $n\to \infty$,
for each fixed $k\in \N_0$. 
\medskip

Fix any $\varepsilon >0$. Since the resolvent trace
$$
\textup{Tr}(\Delta+z^2)^{-\A} = \sum_{k=0}^\infty (\lambda_k + z^2)^{-\A},
$$
is a convergent series, there exists $K\in \N$ sufficiently large, 
such that 
$$
\sum_{k=K}^\infty (\lambda_k + z^2)^{-\A} < \varepsilon /3.
$$
Consequently, since $(\lambda_{k,n})_{n\in \N}$ is an ascending sequence, by the 
minimax principle (cf. \cite{Dod:FDA}), we may estimate
$$
0 \leq \sum_{k=K}^\infty (\lambda_k + z^2)^{-\A} - \sum_{k=K}^{N(n)} (\lambda_{k,n} + z^2)^{-\A}
< 2 \varepsilon /3.
$$
Finally, given 
convergence of the spectrum, choose 
$n\geq n_0$, such that
$$
0 \leq \sum_{k=0}^{K} \left((\lambda_k + z^2)^{-\A} - (\lambda_{k,n} + z^2)^{-\A}
\right) < \varepsilon /3.
$$ 
Thus, for any given $\varepsilon >0$, there exists 
$n\in \N$ sufficiently large, such that
$$
0 \leq \textup{Tr}(\Delta+z^2)^{-\A} - \textup{Tr}(\Delta_n+z^2)^{-\A} = 
\sum_{k=0}^\infty (\lambda_{k} + z^2)^{-\A}
- \sum_{k=0}^{N(n)}(\lambda_{k,n} + z^2)^{-\A} < \varepsilon.
$$
Hence the combinatorial resolvent trace $\textup{Tr}(\Delta_n+z^2)^{-\A}$ converges
to $\textup{Tr}(\Delta+z^2)^{-\A}$ as $n\to \infty$. The general statement follows
from the observation
\begin{align*}
&\partial_z \textup{Tr} (\Delta_n+z^2)^{-\A} = 2z \textup{Tr} (\Delta_n+z^2)^{-\A-1}, \\
&\partial_z \textup{Tr} (\Delta+z^2)^{-\A} = 2z \textup{Tr} (\Delta+z^2)^{-\A-1}.
\end{align*}

\end{proof}

%**************************************************************************
\section{Proof of the main result}
%**************************************************************************

We may now prove our main theorem. 

\begin{theorem}The logarithmic determinant $\log \det \Delta_n$ admits a regularized limit as $n\to \infty$ and 
\begin{align*}
\log \det\nolimits_\zeta \Delta = \LIM_{n\to \infty} \log \det \Delta_n.
\end{align*}
\end{theorem}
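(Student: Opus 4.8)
The plan is to start from the integral representation \eqref{det-comb} together with the polyhomogeneous expansion of Theorem~\ref{trace-phg}, and to split the integral at $z=1$. Writing
\[
-\tfrac12\log\det\Delta_n
=\regint_0^1 z^{2m-1}\textup{Tr}(\Delta_n+z^2)^{-m}\,dz
+\regint_1^\infty z^{2m-1}\textup{Tr}(\Delta_n+z^2)^{-m}\,dz,
\]
I would show that each of the two summands admits a regularized limit as $n\to\infty$ equal to the corresponding integral for $\Delta$; by linearity of $\LIM$ and by \eqref{det} this gives $\LIM_{n\to\infty}\log\det\Delta_n=\log\det\nolimits_\zeta\Delta$, and existence of the regularized limit is visible along the way.

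Near $z=0$ one writes $\textup{Tr}(\Delta_n+z^2)^{-m}=z^{-2m}+R_n(z)$ with $R_n(z)=\sum_{\lambda\in\sigma(\Delta_n)\setminus\{0\}}(\lambda+z^2)^{-m}$ smooth on $[0,1]$ for fixed $n$. The minimax comparison from the proof of Proposition~\ref{trace-limit}, together with $\zeta(m,\Delta)=\sum_{k\in\Z^m\setminus\{0\}}|k|^{-2m}<\infty$, gives the uniform bound $0\le R_n(z)\le\zeta(m,\Delta_n)\le\zeta(m,\Delta)$ for all $n$ and all $z\in[0,1]$. Since $\regint_0^1 z^{-1}\,dz=0$, one has $\regint_0^1 z^{2m-1}\textup{Tr}(\Delta_n+z^2)^{-m}\,dz=\int_0^1 z^{2m-1}R_n(z)\,dz$, and dominated convergence with the pointwise limit of Proposition~\ref{trace-limit} identifies the limit as $n\to\infty$ with $\regint_0^1 z^{2m-1}\textup{Tr}(\Delta+z^2)^{-m}\,dz$; being an ordinary limit it coincides with the regularized one.

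Near $z=\infty$ I would use Theorem~\ref{trace-phg} to write $\textup{Tr}(\Delta_n+z^2)^{-m}=\sum_{j=0}^{m}h_{-m-j}(z,n)+H(z,n)$ with $h_{-2m}\equiv0$ and $z^{2m-1}H(z,n)=O(z^{-3})$ uniformly in $n$, hence
\[
\regint_1^\infty z^{2m-1}\textup{Tr}(\Delta_n+z^2)^{-m}\,dz
=\sum_{j=0}^{m-1}\regint_1^\infty\phi_j(z,n)\,dz+\int_1^\infty z^{2m-1}H(z,n)\,dz,\qquad\phi_j:=z^{2m-1}h_{-m-j},
\]
the last integral being ordinary and convergent. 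Each $\phi_j$ is homogeneous of order $d_j=m-1-j\ge 0$ jointly in $(z,n)$ and, after taking enough terms in the expansions of $h_{-m-j}(z,1)$ and $h_{-m-j}(1,n)$ supplied by Theorem~\ref{trace-phg}, satisfies the hypotheses of Proposition~\ref{interchange}; since $d_j\ne-1$ the correction term there is absent, so $\LIM_{n}\regint_1^\infty\phi_j(z,n)\,dz=\regint_1^\infty\LIM_{n}\phi_j(z,n)\,dz$. This is exactly where the vanishing $h_{-2m}\equiv0$ enters: the only homogeneity order that would produce a $\phi$ of order $-1$, and hence a correction $\regint_0^\infty\phi(z,1)\,dz$, is $-2m$, which is excluded. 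The remainder $z^{2m-1}H(z,n)$ is bounded by $Cz^{-3}$ uniformly in $n$ and converges pointwise by Proposition~\ref{trace-limit}, so dominated convergence applies to $\int_1^\infty z^{2m-1}H(z,n)\,dz$. Reassembling through linearity of $\regint_1^\infty$ and Proposition~\ref{trace-limit} — which, with the uniform bound on $H$, force $\sum_{j=0}^{m-1}\LIM_n\phi_j(z,n)+\lim_n z^{2m-1}H(z,n)=z^{2m-1}\textup{Tr}(\Delta+z^2)^{-m}$ — then yields $\LIM_{n}\regint_1^\infty z^{2m-1}\textup{Tr}(\Delta_n+z^2)^{-m}\,dz=\regint_1^\infty z^{2m-1}\textup{Tr}(\Delta+z^2)^{-m}\,dz$, and combining the two regions with \eqref{det} finishes the proof.

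The delicate part is the region $z\to\infty$: one must peel off precisely the polyhomogeneous terms so that the remainder is genuinely integrable and uniformly controlled in $n$; check that no correction term survives because exactly the order $-2m$ vanishes; and verify that the $n\to\infty$ limits of the homogeneous pieces carry no residual logarithms, so that $\LIM_n\phi_j=\lim_n\phi_j$ and the pieces recombine into $\textup{Tr}(\Delta+z^2)^{-m}$. The latter is forced by the observation that $\sum_{j}\phi_j(z,n)$ is bounded in $n$ for fixed $z$ — via $\textup{Tr}(\Delta_n+z^2)^{-m}\le\textup{Tr}(\Delta+z^2)^{-m}$ and the uniform bound on $z^{2m-1}H$ — which rules out positive powers of $n$ and $\log n$ in the joint $n$-expansion, so by linear independence the same holds for each $\phi_j$ individually.
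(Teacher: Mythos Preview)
Your proof is correct and follows essentially the same route as the paper's: split the integral at $z=1$; on $[0,1]$ subtract the kernel contribution $z^{-2m}$ and apply dominated convergence; on $[1,\infty)$ use the polyhomogeneous decomposition, apply Proposition~\ref{interchange} term by term to the homogeneous pieces, and use dominated convergence on the uniformly $O(z^{-2m-2})$ remainder. Two small remarks: the vanishing $h_{-2m}\equiv 0$ that you invoke is recorded in Theorem~\ref{mainthm} rather than in Theorem~\ref{trace-phg} (the paper's own proof in fact keeps $h_{-2m}$ in the sum, picks up the correction term from Proposition~\ref{interchange}, and only kills it at the end via Theorem~\ref{mainthm}); and your closing paragraph --- arguing via boundedness and linear independence of $z$-powers that each $\LIM_n\phi_j$ is a genuine limit --- is correct but not needed, since linearity of $\LIM_n$ alone already gives $\sum_j\LIM_n h_{-m-j}+\lim_n H=\LIM_n\textup{Tr}(\Delta_n+z^2)^{-m}=\textup{Tr}(\Delta+z^2)^{-m}$, which is how the paper reassembles the pieces.
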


\begin{proof}
By \eqref{det} and Proposition \ref{trace-limit}
\begin{align*}
\log \det\nolimits_\zeta \Delta 
&= -2 \regint_0^\infty  z^{2m-1}
\textup{Tr}(\Delta+z^2)^{-m} dz \\
& = -2 \regint_0^\infty  z^{2m-1}
\lim_{n\to \infty}\textup{Tr}(\Delta_n+z^2)^{-m} dz \\
& = -2 \regint_0^\infty  z^{2m-1}
\LIM_{n\to \infty}\textup{Tr}(\Delta_n+z^2)^{-m} dz.
\end{align*}
By Theorem \ref{trace-phg}, $\textup{Tr}(\Delta_n+z^2)^{-m}$ 
admits a polyhomogeneous expansion 
$$
\textup{Tr}(\Delta_n+z^2)^{-m} = \sum_{j=0}^{m} h_{-m-j}(z,n) + H(z,n),
$$
where each $h_{-m-j}\in C^\infty(\R_+^2)$ is homogeneous of order 
$(-m-j)$ jointly in $(z,n)$, and the remainder term satisfies
$H(z,n)=O(z^{-2m-2})$ as $z\to \infty$, uniformly in $n>0$. 
By Proposition \ref{interchange}
\begin{align*}
-2 \regint_1^\infty  z^{2m-1}
 \LIM_{n\to \infty} \sum_{j=0}^{m} h_{-m-j}(z,n) dz 
= &-2 \LIM_{n\to \infty} \regint_1^\infty  z^{2m-1}
\sum_{j=0}^{m} h_{-m-j}(z,n) dz \\ 
&-2 \regint_0^\infty z^{2m-1}h_{-2m}(z,1) dz.
\end{align*}

Each homogeneous term $h_{-m-j}$
admits a regularized limit as $n\to \infty$ and 
hence, by Proposition \ref{trace-limit}, the remainder term $H(z,n)$
admits a regularized limit as $n\to \infty$ as well. Since the 
estimate $H(z,n)=O(z^{-2m-2})$ for $z\to \infty$ is uniform in $n>0$,
the regularized limit $\LIM_{n\to \infty}H(z,n)$ is in fact a true limit 
$\lim_{n\to \infty} H(z,n)$. We find by dominated convergence
\begin{align*}
-2 \int_1^\infty  z^{2m-1}
\lim_{n\to \infty} H(z,n) dz 
= -2 \lim_{n\to \infty} \int_1^\infty  z^{2m-1}
H(z,n) dz.
\end{align*}
Consequenty, we arrive at the following intermediate result
\begin{equation}\label{change1}
\begin{split}
-2 \regint_1^\infty  z^{2m-1}
\textup{Tr}(\Delta+z^2)^{-m} dz 
 = &-2 \regint_1^\infty  z^{2m-1}
\lim_{n\to \infty}\textup{Tr}(\Delta_n+z^2)^{-m} dz \\
 = &-2 \LIM_{n\to \infty} \regint_0^\infty  z^{2m-1}
\textup{Tr}(\Delta_n+z^2)^{-m} dz\\
&-2 \regint_0^\infty z^{2m-1}h_{-2m}(z,1) dz.
\end{split}
\end{equation}
Note that $\textup{Tr}(\Delta+z^2)^{-m}\sim z^{-2m}$ as $z\to 0$
and hence $z^{2m-1}\textup{Tr}(\Delta+z^2)^{-m}$ is not integrable 
at $z=0$ due to non-trivial kernel of $\Delta$. Similar statement holds
for $\textup{Tr}(\Delta_n+z^2)^{-m}$. Subtracting 
the contribution from harmonic functions\footnote{Note from \eqref{spectra} that 
$\ker \Delta_n$ and $\ker \Delta$ are both one-dimensional.}, we find
\begin{equation*}
\begin{split}
-2 \regint_0^1  z^{2m-1}
\textup{Tr}(\Delta+z^2)^{-m} dz 
= -2 \int_0^1  z^{2m-1}
\left(\textup{Tr}(\Delta+z^2)^{-m} - z^{-2m}\right) dz, 
\\ -2 \regint_0^1  z^{2m-1}
\textup{Tr}(\Delta_n+z^2)^{-m} dz 
= -2 \int_0^1  z^{2m-1}
\left(\textup{Tr}(\Delta_n+z^2)^{-m} - z^{-2m}\right) dz, 
\end{split}
\end{equation*}
where the integrals on the right hand side exist in the usual sense.
Consequently, replacing regularized integrals with the usual integrals, 
we may interchange integrals and limits. This yields
\begin{equation}\label{change2}
\begin{split}
-2 \regint_0^1  z^{2m-1}
\textup{Tr}(\Delta+z^2)^{-m} dz 
 = &-2 \regint_0^1  z^{2m-1}
\lim_{n\to \infty}\textup{Tr}(\Delta_n+z^2)^{-m} dz \\
 = &-2 \lim_{n\to \infty} \regint_0^1 z^{2m-1}
\textup{Tr}(\Delta_n+z^2)^{-m} dz.
\end{split}
\end{equation}
By \eqref{change1} and \eqref{change2}, we find in view of the formula
\eqref{det}
\begin{align}\label{almost}
\log \det\nolimits_\zeta \Delta = \LIM_{n\to \infty} \log \det \Delta_n
-2 \regint_1^\infty z^{2m-1}h_{-2m}(z,1) dz.
\end{align}
The statement now follows from the fact that 
$h_{-2m}(z,n)= 0$ by Theorem \ref{mainthm}.
\end{proof}

\section*{Acknowledgements}
The author gratefully acknowledges helpful discussions with Matthias Lesch, Nicolai Reshetikhin and Daniel Grieser. 
He also gratefully acknowledges financial support by the Hausdorff Center for Mathematics in Bonn and
by the Mathematical Institute at M\"unster University.

\providecommand{\bysame}{\leavevmode\hbox{\hrulefill}\thinspace}
\providecommand{\MR}{\relax\ifhmode\unskip\space\fi MR }
% \MRhref is called by the amsart/book/proc definition of \MR.
\providecommand{\MRhref}[2]{%
  \href{http://www.ams.org/mathscinet-getitem?mr=#1}{#2}
}
\providecommand{\href}[2]{#2}

\end{document}